\newtheorem{theorem}{Theorem}[section]
\newtheorem{lemma}[theorem]{Lemma}
\newtheorem{corollary}[theorem]{Corollary}
\theoremstyle{definition}
\theoremstyle{remark}
\newtheorem{remark}[theorem]{Remark}
\numberwithin{equation}{section}
\DeclareMathOperator{\image}{im}
\DeclareMathOperator{\codim}{codim}
\begin{document}

\title{Rigidity of elliptic genera for non-spin manifolds}


 \author{Michael Wiemeler}
 \address{Universit\"at M\"unster\\ Mathematisches Institut\\ Einsteinstra\ss{}e 62\\48149 M\"unster\\Germany}
 \email{wiemelerm@uni-muenster.de}


\subjclass[2010]{53C27, 57R15, 57R91, 57S15, 58J26}

\keywords{rigidity of elliptic genera; orientability of fixed point sets}

\date{\today}


\begin{abstract}
  We discuss the rigidity of elliptic genera for non-spin manifolds \(M\) with \(S^1\)-action.
  We show that if the universal covering of \(M\) is spin, then the universal elliptic genus of \(M\) is rigid.
  Moreover, we show that there is no condition which only depends on \(\pi_2(M)\) that guarantees the rigidity in the case that the universal covering of \(M\) is non-spin.
\end{abstract}

\maketitle

\section{Introduction}
\label{sec:introduction}

A \(\Lambda\)-genus is a ring homomorphism \(\varphi:\Omega_*^{SO}\rightarrow \Lambda\) where \(\Lambda\) is a \(\mathbb{C}\)-algebra and \(\Omega^{SO}_*\) is the oriented bordism ring.
For such a homomorphism one denotes by
\begin{equation*}
  g(u)=\sum_{i\geq 0} \frac{\varphi[\mathbb{C} P^{2i}]}{2i+1} u^{2i+1} \in \Lambda[[u]]
\end{equation*}
the logarithm of \(\varphi\).

A \(\Lambda\)-genus \(\varphi\) is called \emph{elliptic} if there are \(\delta,\epsilon\in\Lambda\) such that its logarithm is given by
\begin{equation*}
  g(u)=\int_0^u\frac{dz}{\sqrt{1-2\delta z^2+\epsilon z^4}}.
\end{equation*}
Examples of elliptic genera are the signature and the \(\hat{A}\)-genus.
For background material on elliptic genera see \cite{MR1189136} and \cite{MR970278}.

There is a universal elliptic genus \(\phi:\Omega^{SO}_*\rightarrow \mathbb{C}[[q]]\) which actually takes values in  \(\mathbb{Q}[[q]]\).
For an oriented even-dimensional manifold \(M\) the coefficients of \(\phi[M]\) are indices of the signature operator on \(M\) twisted with some vector bundles (for a precise definition see \cite[Section 2]{MR954493}).
In the following we are mainly concerned with this genus (and its equivariant refinement).
Therefore we call \(\phi\) \emph{the} elliptic genus.

If the oriented closed even-dimensional manifold \(M\) is acted on by the circle group \(S^1\), then \(\phi[M]\) can be refined to the equivariant elliptic genus \(\phi_{S^1}[M]\in R(S^1)[[q]]\otimes \mathbb{Q}\) of \(M\) where \(R(S^1)=\mathbb{Z}[\lambda,\lambda^{-1}]\) is the complex representation ring of \(S^1\).
Note that \(\phi_{S^1}[M]\) is an equivariant bordism invariant of \(M\).

So \(\phi_{S^1}[M]\) is of the form
\[\phi_{S^1}[M]=\sum_{n=0}^\infty\sum_{k\in \mathbb{Z}} a_{nk} \lambda^k q^n\]
with \(a_{nk}\in \mathbb{Q}\) such that for fixed \(n\geq 0\) there are at most finitely many \(k\in \mathbb{Z}\) with \(a_{nk}\neq 0\).

\(\phi_{S^1}[M]\) is called rigid if \(a_{nk}=0\) whenever \(k\neq 0\).
It has been conjectured by Witten \cite{MR885560} that equivariant elliptic genera of closed, spin manifolds with \(S^1\)-action are rigid. Here an oriented manifold is called spin if its second Stiefel--Whitney class vanishes.

Witten's conjecture was proved by Bott-Taubes \cite{MR954493}.
Alternative proofs have been given in \cite{MR998662}, \cite{MR1331972}, \cite{MR1722036} and \cite{MR4146575}.
Herrera-Herrera \cite{MR1979364} proposed a proof of the rigidity of elliptic genera of \(\pi_2\)-finite, oriented manifolds.
However, a mistake in the proof was found by Amann and Dessai \cite{MR2600123}.
They suggested that assuming \(\pi_2\)- and \(\pi_4\)-finiteness would be enough to come around this problem and to prove the rigidity (see \cite{MR2916045}).

The aim of this paper is to clarify Lemma 1 of \cite{MR1979364}. We find a counterexample to the original statement and obtain the same conclusion under additional hypothesis. We note that Amann and Dessai clarified the proof of Lemma 2 of \cite{MR1979364}.

Lemma 1 of \cite{MR1979364} claims that if \(M\) is an even-dimensional orientable manifold acted on by \(S^1\), then the \(H\)-fixed point components in \(M\) which contain \(S^1\)-fixed points are orientable for every subgroup \(H\subset S^1\).
We prove this claim under the extra assumption that the universal covering of \(M\) is a spin manifold.
This then implies the rigidity of the elliptic genera of closed, oriented \(S^1\)-manifolds \(M\) with spin universal cover.

We note that if the oriented, closed, connected manifold \(M\) has a finite cover \(\tilde{M}\) which is spin then the rigidity of \(\phi_{S^1}[M]\) can be deduced directly from Bott-Taubes's theorem. Indeed, by the Lefschetz fixed point formula we can assume that there are \(S^1\)-fixed points in \(M\).
This implies that the action of \(S^1\) on \(M\) lifts to an action on \(\tilde{M}\).
Moreover, since \(\tilde{M}\rightarrow M\) is an equivariant covering map with \(k<\infty\) sheets the equivariant elliptic genera of \(\tilde{M}\) and \(M\) are related as follows:
\[k\phi_{S^1}[M]=\phi_{S^1}[\tilde{M}].\]
Therefore the rigidity for \(M\) follows from the rigidity for \(\tilde{M}\).

However, we also note that there are manifolds with spin universal cover which do not admit any finite spin cover \cite{Mathoverflow}. Therefore we think it is worth stating and proving the following:

\begin{theorem}
  \label{sec:main}
  Let \(M\) be a closed, oriented, even-dimensional \(S^1\)-manifold whose universal covering is spin.
  Then the equivariant elliptic genus of \(M\) is rigid.
\end{theorem}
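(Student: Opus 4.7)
The strategy is to reduce the theorem to a corrected form of Lemma~1 of \cite{MR1979364}: under the hypothesis that $\tilde M$ is spin, every connected component $F$ of $M^H$ containing an $S^1$-fixed point is orientable, for every closed subgroup $H\subseteq S^1$. Granted this, the remainder of the Herrera--Herrera argument, together with Lemma~2 in the form repaired by Amann--Dessai in \cite{MR2600123}, yields the rigidity of $\phi_{S^1}[M]$.

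To establish the orientability, let $\pi\colon\tilde M\to M$ be the universal covering, which is spin by hypothesis, and fix an $S^1$-fixed point $p_0\in F$. Because $\mathbb R\to S^1$ is a universal cover and $\tilde M$ is simply connected, the $S^1$-action on $M$ lifts uniquely to an $\mathbb R$-action on $\tilde M$. The first key observation is that this $\mathbb R$-action in fact descends to an honest $S^1$-action on $\tilde M$: any lift $\tilde p_0$ of $p_0$ is $\mathbb R$-fixed (a continuous path in the discrete fibre over $p_0$ must be constant), and hence the kernel $\mathbb Z\subset\mathbb R$ of $\mathbb R\to S^1$, which acts on $\tilde M$ through deck transformations, must act trivially since non-trivial deck transformations are fixed-point free. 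Let $\tilde F$ be the component of $\tilde M^H$ containing $\tilde p_0$; a path-lifting argument (any path in $F$ from $p_0$ lifts uniquely to a path in $\tilde M^H$, because deck translates of the lift agreeing at $\tilde p_0$ coincide everywhere) shows that $\pi\colon\tilde F\to F$ is a surjective covering.

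I would then orient $\tilde F$ using the spin structure and push the orientation down to $F$. Since $\tilde M$ is simply connected and spin, its spin structure is unique and is therefore preserved by the $S^1$-action and by every deck transformation. The classical theorem that a compact Lie group acting on a spin manifold and preserving its spin structure has orientable fixed-point components (via the equivariant spin lift of the normal bundle) then orients $\tilde F$ canonically. Deck transformations commute with the $S^1$-action---the commutator $\sigma g\sigma^{-1}g^{-1}$ defines a continuous map from the connected group $S^1$ into the discrete deck group, hence is identically trivial---and they preserve the unique spin structure, so whenever they stabilize $\tilde F$ they do so by orientation-preserving diffeomorphisms. Consequently the orientation of $\tilde F$ descends to an orientation on $F=\tilde F/G_F$, where $G_F\subseteq \pi_1(M)$ is the setwise stabilizer.

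The step I expect to demand the most care is the reduction from the lifted $\mathbb R$-action to an honest $S^1$-action on $\tilde M$. Without this reduction one would instead have to treat a non-compact $\mathbb R$- or $\mathbb Z$-action on the possibly non-compact manifold $\tilde M$, where the direct analogue of the compact-group orientability theorem is considerably less standard. Once the reduction is in hand, however, the remaining arguments are classical equivariant-spin considerations combined with routine covering-space bookkeeping, and the theorem follows by feeding the resulting orientability lemma into the Herrera--Herrera/Amann--Dessai rigidity machinery.
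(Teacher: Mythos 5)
Your strategy is genuinely different from the paper's. The paper's key insight (its Lemma~\ref{sec:manif-with-univ}) is that $\tilde M$ is spin if and only if $f^*w_2(TM)=0$ for every $f\colon S^2\to M$; it then constructs an explicit $S^1$-equivariant map $f\colon S^2\to F$ so that $f^*w_2(E_0\oplus E_{k/2})$ detects the non-orientability of $F$, and the $w_2$-characterization forces this class to vanish. Your proposal instead lifts the $S^1$-action to $\tilde M$, orients $\tilde F$ using the spin structure on $\tilde M$, and tries to push the orientation down. Your lifting argument is fine (the $\mathbb R$-orbit of a lift of an $S^1$-fixed point lies in a discrete fibre, hence is a point, so $\mathbb Z\subset\mathbb R$ acts by deck transformations with fixed points and is therefore trivial), and $\pi\colon\tilde F\to F$ is indeed a covering. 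But the crucial descent step has a genuine gap.

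Showing that $\tilde F$ is orientable is not enough to conclude that $F$ is orientable: you also need the subgroup of the deck group stabilizing $\tilde F$ to act orientation-\emph{preservingly} on $\tilde F$. You assert this follows because a deck transformation $\sigma$ preserves the unique spin structure, commutes with $S^1$, and is orientation-preserving on $\tilde M$. That is not automatic. What those three facts give you is that $\sigma$ preserves the isotypic splitting $T\tilde M|_{\tilde F}=E_0\oplus E_1\oplus\dots\oplus E_{[k/2]}$, preserves the complex orientations of the $E_i$ with $0<i<k/2$ (since the complex structures are pinned down by the $S^1$-action), and hence preserves the orientation of $E_0\oplus E_{k/2}$. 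This says $\sigma$ preserves the orientation of $E_0=T\tilde F$ \emph{if and only if} it preserves the orientation of $E_{k/2}$ --- but it does not by itself tell you which of the two alternatives holds. Deciding that requires the more delicate interaction between the spin structure and the $\mathbb Z_2=H/\mathbb Z_{k/2}$-action on $E_{k/2}$; this is exactly the content of Lemmas~9.3 and 10.1 of Bott--Taubes, and the ambiguity $\tilde h\mapsto -\tilde h$ in lifting $h\in H$ to the spin bundle (which a spin-preserving $\sigma$ without a common fixed point with $S^1$ could in principle swap) is precisely what has to be ruled out. The paper sidesteps this by working directly with $w_2$ via an $S^2$ mapped into $F$, and never needs to compare orientations between different deck translates. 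Separately, you also quote ``Lemma~2 in the form repaired by Amann--Dessai'' as the second ingredient, but Section~5 of the paper exhibits an explicit example ($\mathbb{C}P^2_-\#\mathbb{C}P^2_+$) showing that lemma still fails even after the Amann--Dessai correction; the correct replacement under the spin-universal-cover hypothesis is Lemma~9.3 of Bott--Taubes, whose argument the paper observes carries over verbatim once orientability is known. Finally, you invoke the classical fixed-set orientability theorem on the (generally non-compact) $\tilde M$; while the underlying argument is local and should extend, this is not the setting in which the cited theorems are stated and would need a remark.
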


Using an argument which goes back to Hirzebruch and Slodowy \cite{MR1066570} one sees that this theorem implies the following generalization of a result of Atiyah and Hirzebruch \cite{MR0278334}.

\begin{corollary}
  \label{sec:introduction-2}
    Let \(M\) be a closed, connected, oriented manifold whose universal covering is spin such that \(S^1\) acts non-trivially on \(M\).
  Then the \(\hat{A}\)-genus of \(M\) vanishes.
\end{corollary}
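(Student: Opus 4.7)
The plan is to combine Theorem~\ref{sec:main} with an observation of Hirzebruch--Slodowy \cite{MR1066570} that extracts the $\hat{A}$-genus as a cusp value of the Ochanine elliptic genus.

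By Theorem~\ref{sec:main}, the equivariant elliptic genus $\phi_{S^1}[M]$ is rigid, and hence equals $\phi[M]\in\mathbb{Q}[[q]]$. Following Hirzebruch--Slodowy, one views $\phi[M](q)$ as the $q$-expansion at the cusp $\tau=i\infty$ of a modular form on $\Gamma_0(2)$ (with $q=e^{2\pi i\tau}$); the modular transformation $\tau\mapsto -1/(2\tau)$ sends us to the other cusp, where the leading coefficient of the resulting expansion equals $\hat{A}[M]$ up to a universal normalization. Performing the same operation on the equivariant genus $\phi_{S^1}[M](\lambda,\tau)$ (viewed as a Jacobi-type expression with $\lambda=e^{2\pi iz}$) yields a $\lambda$-dependent equivariant $\hat{A}$-type invariant of $M$; rigidity of $\phi_{S^1}[M]$ implies rigidity of its modular transform, so this invariant is independent of $\lambda$ and equals $\hat{A}[M]$ at $\lambda=1$.

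The last step is the classical Atiyah--Hirzebruch asymptotic argument. The equivariant refinement admits, via the equivariant fixed-point formula (well-defined on $M$ because the spin structure on the universal cover $\tilde{M}$ supplies local spin structures on $M$), a presentation as a sum of rational functions of $\lambda$, one for each component of $M^{S^1}$. Since the $S^1$-action on $M$ is non-trivial, $M^{S^1}$ has positive codimension, and a standard analysis of the asymptotics of these rational functions as $\lambda\to\infty$ (or $\lambda\to 0$) forces their sum to vanish in that limit. Combined with the rigidity (constancy in $\lambda$), this gives $\hat{A}[M]=0$.

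The hard part is the Hirzebruch--Slodowy identification of the leading term with an equivariant $\hat{A}$-type invariant in the non-spin setting. The hypothesis that $\tilde{M}$ is spin is precisely what is needed: local spin structures on $M$ suffice for the fixed-point contributions to be well-defined, even though a global spin structure on $M$ need not exist. Alternatively, the equivariant $\hat{A}$-invariant may be recast as an index-type invariant of an appropriately twisted signature operator on $M$, thereby bypassing any reference to a global spin structure.
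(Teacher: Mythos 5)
Your overall approach is the paper's: the corollary is deduced from Theorem~\ref{sec:main} via the arguments of Hirzebruch--Slodowy \cite{MR1066570} and Herrera--Herrera \cite{MR1979364}, which the paper simply cites, and your sketch of those arguments (modular transform to the other cusp of $\Gamma_0(2)$ to extract $\hat{A}$, then the Atiyah--Hirzebruch asymptotic analysis of the fixed-point contributions using that the fixed set has positive codimension) is the right skeleton. One step of your justification is off, however. You assert that well-definedness of the fixed-point formula in the non-spin setting needs ``local spin structures on $M$'' supplied by the spin structure on $\tilde M$. That is not how the hypothesis enters: the elliptic genus and its equivariant refinement are built from the \emph{signature} operator and its twists, so the Lefschetz/Atiyah--Singer fixed-point formula is available on any oriented $S^1$-manifold with no spin structure whatsoever, local or global. (Your own parenthetical alternative at the end is the correct observation.) The spin hypothesis on $\tilde M$ does essential work, but it is spent entirely in proving Theorem~\ref{sec:main}: concretely, it enters through Lemma~\ref{sec:manif-with-univ-3}, which guarantees orientability of the $\mathbb{Z}_k$-fixed components containing $S^1$-fixed points and the compatible choice of orientations needed to run the Bott--Taubes transfer argument. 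Once rigidity is in hand, the passage to vanishing of $\hat{A}[M]$ is a general statement about rigid equivariant elliptic genera of oriented $S^1$-manifolds with nonempty singular set, and makes no further reference to spin data. With that correction, your proof is sound.
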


In \cite{MR2600123} (see also \cite{amann2023addendum})  examples of oriented, closed, simply connected, effective \(S^1\)-manifolds \(M\) with \(\pi_2(M)=\mathbb{Z}_2\) and \(\hat{A}(M)\neq 0\) were given.
In view of the original claim in \cite{MR1979364} and Corollary~\ref{sec:introduction-2} one might ask if there is any condition on \(\pi_2\) of a closed, connected, oriented, effective \(S^1\)-manifold \(M\) with non-spin universal covering which guarantees the vanishing of \(\hat{A}(M)\).
Using equivariant surgery, we give the following negative answer to this question.

\begin{theorem}
 \label{sec:introduction-1}
  Let \(V\) be a group such that there is a closed, connected, oriented manifold \(P\) of dimension \(4n\geq 8\) with non-spin universal covering and \(\pi_2(P)\cong V\).
  Then there is a closed, connected, oriented, effective \(S^1\)-manifold \(M\) of dimension \(4n\) with \(\hat{A}(M)\neq 0\) and \(\pi_2(M)\cong V\).
\end{theorem}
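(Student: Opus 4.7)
The plan is to combine $P$ with copies of a suitable auxiliary $S^1$-manifold $N$ via an equivariant analogue of the connected sum and then correct $\pi_2$ by a further equivariant surgery. For $N$ I would take a closed, simply connected, effective $S^1$-manifold of dimension $4n$ with $\pi_2(N)\cong\mathbb{Z}/2$ and $\hat A(N)\ne 0$, as supplied by the examples of Amann--Dessai \cite{MR2600123}. Form equivariantly $M':=P\mathbin{\#_{S^1}}N^{\#k}$, with $S^1$ acting trivially on the $P$-piece and in the given way on each $N$-piece, and pick $k$ so that $\hat A(M')=\hat A(P)+k\,\hat A(N)\ne 0$. Van Kampen gives $\pi_1(M')\cong\pi_1(P)$, and the universal-cover description $\widetilde{M'}=\widetilde P\,\#\,\bigsqcup_{g\in\pi_1(P)}N^{\#k}$ together with Hurewicz yields, as an abelian group,
\[
\pi_2(M')\;\cong\;V\ \oplus\ (\mathbb{Z}/2)[\pi_1(P)]^{\oplus k},
\]
the second summand being generated as a $\mathbb{Z}[\pi_1(M')]$-module by the $k$ classes $[\sigma_i]$ arising from the $\pi_2$-generators of the individual $N$-pieces.

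Next I would perform one equivariant surgery per class $[\sigma_i]$. Each $\sigma_i$ can be arranged to be represented by an $S^1$-invariant $2$-sphere lying in the fixed-point set of its $N$-piece, so that it has an equivariant tubular neighbourhood. A surgery on $\sigma_i$ in $M'$ lifts, in the universal cover, to simultaneous surgery on every translate $g\cdot\widetilde\sigma_i$, so it kills the entire $\mathbb{Z}[\pi_1(M')]$-submodule generated by $[\sigma_i]$ in $\pi_2(M')$; since $4n-3>2$, no new $\pi_2$-classes are introduced. After the $k$ surgeries the resulting $S^1$-manifold $M$ has $\pi_2(M)\cong V$. Effectivity of the $S^1$-action on $M$ is inherited from the $N$-pieces, and since each surgery realises the trace of an oriented handle attachment and $\hat A$ is a Pontryagin-number (hence oriented-bordism) invariant, $\hat A(M)=\hat A(M')\ne 0$. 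In particular, the universal cover of $M$ is automatically non-spin by (the contrapositive of) Theorem~\ref{sec:main}.

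The hard part is the equivariance of the first step: at an $S^1$-fixed point of $N$ the isotropy representation is nontrivial, whereas on $P$ the $S^1$-action is trivial, so one cannot literally glue punctured balls of $P$ and $N$ along an $S^1$-equivariant diffeomorphism of their boundary spheres. The remedy is to replace the naive connected sum by an equivariant $0$-surgery along $S^1$-orbits, inserting an auxiliary bridge piece whose $S^1$-action interpolates between the trivial representation on the $P$-side and the isotropy representation at a fixed point of $N$. Carrying this out so that each $\sigma_i$ still admits a fixed-locus representative is the technical crux, and it likely requires a careful choice of the Amann--Dessai manifold $N$ with a sufficiently rich fixed-point structure (with matching tangent isotropy types) to allow all the equivariant surgeries to be carried out.
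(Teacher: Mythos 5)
Your construction follows a genuinely different route from the paper, but it contains a concrete gap in the surgery step that I don't think can be repaired within your framework. The classes $\sigma_i$ you propose to surger away are precisely the generators of $\pi_2$ of the $N$-pieces, and for the Amann--Dessai manifold $N$ one has $w_2(TN)\ne 0$ pairing nontrivially with the generator of $\pi_2(N)\cong\mathbb{Z}/2$ (this is forced: $N$ is simply connected with $\hat A(N)\ne 0$, so it cannot be spin, and $H^2(N;\mathbb{Z}/2)\cong\mathbb{Z}/2$). Hence $\sigma_i^*w_2(TM')\ne 0$. But the normal bundle of an embedded $2$-sphere in an oriented $(4n)$-manifold is classified by $\pi_1(SO(4n-2))\cong\mathbb{Z}/2$, i.e.\ by $w_2$ of the normal bundle, which here equals $\sigma_i^*w_2(TM')$. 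So the normal bundle of any sphere representing $\sigma_i$ is the nontrivial bundle, it admits no framing, and no surgery can be performed on it --- equivariantly or otherwise. There is no way around this by choosing the representative more carefully; $w_2$ is a homotopy invariant of the class.

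The difficulty you flag at the end --- that one cannot glue a trivial $S^1$-action on $P$ to the nontrivial action on $N$ --- is also genuine and not merely technical: a smooth $S^1$-action on a connected manifold that is trivial on an open set is trivial everywhere, so there is no ``bridge piece'' that lets $S^1$ act trivially on the whole $P$-summand. The paper avoids both problems at one stroke by never touching the singular strata at all. Rather than using the Amann--Dessai examples, it takes $N=\mathbb{C}P^{2n}$ with a linear action whose fixed components have codimension at least $4$, passes from $P$ to a $(4n-1)$-manifold $P'$ (Lemma~\ref{sec:equivariant-surgery-2}) with the same $\pi_1$, $\pi_2$ and non-spin universal cover, and then glues $P'\times S^1$ with its free $S^1$-action into the free part $N_0$. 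All subsequent corrections to $\pi_2$ are equivariant surgeries inside $N_0$, i.e.\ ordinary surgeries on the orbit space $N_0/S^1$, and they are performed on classes ($(1-g_i)b$ and $c+a$) that are \emph{chosen} to lie in the kernel of both $c_1(N_0')$ and $w_2(T(N_0'/S^1))$, so the normal bundles are trivial and the surgeries go through. This careful bookkeeping of $w_2$ on the classes being surgered is exactly what your proposal is missing.
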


To explain the proof of Theorem~\ref{sec:main} we have to recall the strategy of proof of Bott and Taubes \cite{MR954493}.

Let \(M\) be a closed, spin, even-dimensional \(S^1\)-manifold. Then the proof of the rigidity of \(\phi_{S^1}[M]\) in \cite{MR954493} has two geometric inputs:
\begin{enumerate}
\item For every subgroup \(H\subset S^1\), the fixed set \(M^H\) is orientable.
\item Orientations on \(M^H\) can be chosen in a specific way.
\end{enumerate}

The first property is established in Lemma 10.1 of \cite{MR954493} and also follows from a theorem of Edmonds \cite{MR603614}.
How to choose the orientations for \(M^H\) is explained on pages 157-158 of \cite{MR954493}.
That the choice of orientations discussed there is possible is guaranteed by Lemma 8.1 of \cite{MR954493} which is a special case of Lemma 9.3 of \cite{MR954493}.
These lemmas also give relations between the weights of the isotropy representations at the various \(S^1\)-fixed points in \(M\).

From these two properties Bott and Taubes deduce the rigidity of elliptic genera using the Lefschetz fixed point formula and complex analysis.
If \(M\) is not spin such that the \(S^1\)-action has these two properties, then one can also prove the rigidity for this \(S^1\)-action along the same lines.
As noted by Herrera and Herrera \cite{MR1979364}, to do so one only needs the orientability of those components of \(M^H\) which contain \(S^1\)-fixed points.

Our proof is completed by showing that this holds if the universal covering of \(M\) is spin.

This paper is structured as follows. We give a counterexample to Lemma 1 of \cite{MR1979364} in Section \ref{sec:count-lemma-1}.
In Section~\ref{sec:solution-problem} we then give a proof of this lemma under the extra assumption that the universal covering is spin. With this result we can complete the proof of Theorem~\ref{sec:main}.
In Section~\ref{sec:equivariant-surgery} we then prove a refined version of Theorem~\ref{sec:introduction-1}.
Then in Section~\ref{sec:another-example} we give another example which shows that there is also a gap in the proof of Lemma 2 in \cite{MR1979364}.
This gap is different from the mistake found by Amann and Dessai.

I would like to thank Manuel Amann and Anand Dessai for discussions on the subject of this paper.
I would also like to thank Matthias Wink for comments on an earlier version of this paper.
I thank an anonymous referee for comments which helped to improve the readability of this paper.

This research was funded by the Deutsche Forschungsgemeinschaft (DFG, German Research Foundation) under Germany's Excellence Strategy EXC 2044--390685587, Mathematics M\"unster: Dynamics--Geometry--Structure and within CRC 1442 Geometry: Deformations and Rigidity.

\section{An example of an orientable $S^1$-manifold with non-orientable singular strata}
\label{sec:count-lemma-1}

Lemma 1 of \cite{MR1979364} claims that if \(M\) is an orientable \(S^1\)-manifold of even dimension and \(F\subset M^H\) is a \(H\)-fixed point component for some \(H\subset S^1\) with \(F^{S^1}\neq \emptyset\) then \(F\) is orientable.
Here we give an example that shows that this claim is actually false.

For \(i=1,2\) let \(V_i\) be the irreducible \(S^1\)-representation of weight \(i\).
Let \[M_1=S(V_1^{k_1}\oplus V_2^{k_2}\oplus \mathbb{R})\] be the unit sphere in the \(S^1\)-representation \(V_1^{k_1}\oplus V_2^{k_2}\oplus \mathbb{R}\), where \(S^1\) acts trivially on \(\mathbb{R}\) and \(k_1\geq 1\), \(k_2\geq 2\).

Then \(F_1:=M_1^{\mathbb{Z}_2}=S(V_2^{k_2}\oplus \mathbb{R})\) is connected and contains both \(S^1\)-fixed points. Moreover, by the slice theorem, there is an orbit \(O_1\subset F_1\) which has an invariant neighborhood in \(M_1\) of the form
\[U_1=S^1\times_{\mathbb{Z}_2} D(\mathbb{R}_-^{2k_1}\oplus \mathbb{R}^{2k_2-1}),\]
where \(\mathbb{R}_-\) is the nontrivial irreducible \(\mathbb{Z}_2\)-representation and \(\mathbb{R}\) is the trivial one-dimensional \(\mathbb{Z}_2\)-representation. Here \(D(V)\) denotes the unit disc in the representation \(V\).

Moreover, let
\[M_2=S^1\times_{\mathbb{Z}_2}(P(\mathbb{R}^3\oplus \mathbb{R}_-)\times S(\mathbb{R}_-^{2k_1-1}\oplus \mathbb{R}^{2k_2-2})).\]
Here \(P(V)\) denotes the projectivication of the representation \(V\).
Note that \(M_2\) is an orientable \(S^1\)-manifold.

Moreover, the \(\mathbb{Z}_2\)-fixed point component \(F_2:=(S^1/\mathbb{Z}_2)\times P(\mathbb{R}^3)\times S(\mathbb{R}^{2k_2-2})\) in \(M_2\) is non-orientable and contains an orbit \(O_2\) with an invariant neighborhood in \(M_2\) of the form
\[U_2=S^1\times_{\mathbb{Z}_2} D(\mathbb{R}_-^{2k_1}\oplus \mathbb{R}^{2k_2-1}).\]

Hence we can glue \(M_1\setminus \dot{U}_1\) and \(M_2\setminus \dot{U}_2\) along \(\partial U_1=\partial U_2\) to get an orientable \(S^1\)-manifold \(M\).
The manifold \(M\) contains a \(\mathbb{Z}_2\)-fixed point component \(F\) which can be constructed from \(F_1\) and \(F_2\) by glueing \(F_1\setminus \dot{U}_1^{\mathbb{Z}_2}\) and \(F_2\setminus \dot{U}_2^{\mathbb{Z}_2}\) along \(\partial U_1^{\mathbb{Z}_2}=\partial U_2^{\mathbb{Z}_2}\).
Since \[U_1^{\mathbb{Z}_2}=U_2^{\mathbb{Z}_2}=(S^1/\mathbb{Z}_2)\times D(\mathbb{R}^{2k_2-1}),\]
\(F\) is the equivariant connected sum  of \(F_1\) and \(F_2\) at the principal orbits \(O_1\) and \(O_2\).
So \(F\) is non-orientable and contains all \(S^1\)-fixed points.

The problem in the proof of Lemma 1 of \cite{MR1979364} seems to be a false application of Lemma 9.1 of \cite{MR954493}.
This lemma gives a formula for the first Chern class \(c_1(E)\in H^2(S^2;\mathbb{Z})\) of an equivariant complex vector bundle \(E\) over \(S^2\) acted on effectively by \(S^1\) by rotation.
It says that
\begin{equation}
  \label{eq:1}
  c_1(E)[S^2]=m_N-m_S,
\end{equation}
where \(m_N\) and \(m_S\) are the sums of weights of the \(S^1\)-representations on the fibers of \(E\) over the two fixed points \(N,S\in S^2\) respectively.

In their proof, Herrera and Herrera construct spheres \(S^2\) with \(S^1\)-action and equivariant maps \(f:S^2\rightarrow M\) and apply the above formula to \(E=f^*TM\).
This bundle has a compatible complex structure by Lemma 9.2 of \cite{MR954493}.
However, the circle actions on these \(S^2\)'s are in general not effective.
They have principal isotropy group some \(\mathbb{Z}_k\subset S^1\), \(k\geq 1\).
In this situation the formula (\ref{eq:1}) has to be corrected by multiplying the left hand side with \(k\).

So if \(k\) is even the parity of the right hand side does not tell anything about the parity of the first Chern class \(c_1(E)\). But that is needed for the proof.

\section{Manifolds with universal covering that is spin}
\label{sec:solution-problem}

In this section we argue, that the proof of the rigidity of elliptic genera can be carried out under the assumption that the universal cover of the \(S^1\)-manifold is spin.
This gives the following theorem.
\begin{theorem}
  \label{sec:manif-with-univ-1}
  Let \(M\) be a closed, oriented, even-dimensional \(S^1\)-manifold such that its universal cover admits a spin structure. Then the equivariant elliptic genus of \(M\) is rigid.
\end{theorem}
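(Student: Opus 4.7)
The plan is to verify the two geometric inputs that drive the Bott-Taubes proof of rigidity in the spin case: (i) for every subgroup $H\subset S^1$ and every component $F$ of $M^H$ containing an $S^1$-fixed point, $F$ is orientable; and (ii) these orientations can be chosen compatibly, in the sense of \cite[Lemma~9.3]{MR954493}. Once (i) and (ii) are in hand, the Bott-Taubes argument (viewing $\phi_{S^1}[M]$ as a meromorphic function of $\lambda$ and cancelling poles via the Lefschetz fixed-point formula) applies verbatim. The case $M^{S^1}=\emptyset$ is immediate from Atiyah-Segal-Singer localization, which then forces $\phi_{S^1}[M]=0$; so we may assume $M^{S^1}\neq\emptyset$.

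Since $M^{S^1}\neq\emptyset$, the orbit map $S^1\to M$ through any fixed point is constant, so the induced map $\pi_1(S^1)\to\pi_1(M)$ is zero and the $S^1$-action lifts to an $S^1$-action on the universal cover $\tilde M$. The deck transformation group $\pi_1(M)$ then acts on $\tilde M$ commuting with $S^1$. Because $\tilde M$ is simply connected and spin, its spin structure is unique, and consequently both $\pi_1(M)$ and $S^1$ act through Spin-bundle automorphisms. To establish (i), given $F$ as in the hypothesis, choose an $S^1$-fixed point $q\in F$, lift it to $\tilde q\in\tilde M$, and let $\tilde F$ be the component of $\tilde M^H$ through $\tilde q$. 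The Bott-Taubes argument for \cite[Lemma~10.1]{MR954493} is local in nature (it boils down to analyzing the Spin lift of the $H$-action on the normal bundle) and hence applies to the possibly non-compact spin manifold $\tilde M$, yielding orientability of $\tilde F$. Writing $F=\tilde F/G$ with $G\subset\pi_1(M)$ the stabilizer of $\tilde F$, it suffices to show that $G$ acts on $\tilde F$ by orientation-preserving diffeomorphisms.

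Since $G$ preserves the orientation of $\tilde M$ (as deck transformations of an oriented covering) and the splitting $T\tilde M|_{\tilde F}=T\tilde F\oplus\nu\tilde F$ is $G$-invariant, this reduces to showing that $G$ preserves the orientation of $\nu\tilde F$. I would decompose $\nu\tilde F$ into $H$-isotypic summands: those associated to non-real characters of $H$ are canonically complex bundles on which $G$ (commuting with $H$) acts by complex bundle maps, hence preserves orientation. The only subtle summand is the sign-representation part $\nu_-\tilde F$, present only when $|H|$ is even. At $\tilde q$, the weight-$(|H|/2)$ blocks of the $S^1$-representation on $T_{\tilde q}\tilde M$ pair the real $\mathbb R_-$-lines into complex lines, giving $\nu_-\tilde F|_{\tilde q}$ a canonical complex structure. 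The key step is to use the $\pi_1(M)$-invariant Spin structure on $\tilde M$, via the Spin lift of the $H$-action on the normal bundle, to extend this to a global, $G$-invariant almost complex structure on $\nu_-\tilde F$; orientability and $G$-invariance then follow, and descend to the desired orientation on $F$. Hypothesis (ii) is a statement about the induced orientations on the isotypic subbundles of the various $\nu F$'s, and follows from the same construction.

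The main obstacle is the global $G$-equivariant propagation of the complex structure on $\nu_-\tilde F$; this is the step at which the spin hypothesis on $\tilde M$ is indispensable, and where the merely orientable argument of \cite{MR1979364} fails, as the counterexample of Section~\ref{sec:count-lemma-1} shows.
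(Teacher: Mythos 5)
Your strategy is genuinely different from the paper's. You lift the $S^1$-action to the universal cover $\tilde M$, invoke the Bott--Taubes orientability result (their Lemma~10.1) on the simply connected spin manifold $\tilde M$, and then try to descend by showing that the deck group $G$ acts orientation-preservingly on the relevant component $\tilde F$ of $\tilde M^H$. The paper instead never leaves $M$: it characterizes ``spin universal cover'' by the condition that $f^*w_2(TM)=0$ for every $f:S^2\to M$ (Lemma~\ref{sec:manif-with-univ}), and then, given a non-orientable $F$ with $F^{S^1}\ne\emptyset$, explicitly manufactures an $S^1$-equivariant map $f:S^2\to F$ collapsing a circle orbit to a fixed point so that $f^*w_2(E_0\oplus E_{k/2})\ne 0$ while Lemma~9.1 of Bott--Taubes forces $f^*w_2(\bigoplus_{0<i<k/2}E_i)=0$, giving a contradiction. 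The paper's route is more elementary: it avoids equivariant spin structures entirely, does not need Bott--Taubes on a non-compact manifold, and makes transparent exactly where the hypothesis $F^{S^1}\ne\emptyset$ is used.

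The gap in your argument is precisely the step you flag as ``the main obstacle,'' and I do not think it can be closed in the form you state it. You want a global $G$-invariant almost complex structure on the sign-isotypic summand $\nu_-\tilde F$, but in general no such structure exists: at $S^1$-fixed points $\nu_-\tilde F$ acquires a complex structure from the positive $S^1$-weights, but away from the fixed set the residual isotropy $\mathbb{Z}_k$ acts on $\nu_-\tilde F$ only by $\pm 1$, and there is no canonical way to pair the real lines into complex ones. What one can hope to propagate from the fixed points is an \emph{orientation} of $\nu_-\tilde F$, not a complex structure; and the assertion that the orientations induced at different points of $\tilde F^{S^1}$ by their local $S^1$-weight data are mutually consistent is exactly the content of Bott--Taubes Lemma~9.3 for $\tilde M$ --- which is a substantial input that your sketch does not invoke, and whose statement and proof would in turn need to be checked for the non-compact $\tilde M$. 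Until that consistency is in place, showing that a single $g\in G$ is orientation-preserving at one fixed fiber $\nu_-\tilde F|_{\tilde q}$ (where it is indeed $\mathbb{C}$-linear, hence orientation-preserving, by $S^1$-equivariance) does not imply that $g$ preserves a chosen global orientation of $\nu_-\tilde F$, because $\tilde q$ and $g\tilde q$ need not lie in the same component of $\tilde F^{S^1}$. Note also that the hypothesis $F^{S^1}\ne\emptyset$ must do real work somewhere --- the manifold $M_2$ of Section~\ref{sec:count-lemma-1} has spin universal cover and a non-orientable $\mathbb{Z}_2$-fixed component without $S^1$-fixed points --- and your outline uses $F^{S^1}\ne\emptyset$ only to produce the single point $\tilde q$, which is not enough by itself.

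A secondary concern: you assert that Bott--Taubes Lemma~10.1 applies to the possibly non-compact $\tilde M$ because the argument is ``local.'' Their proof in fact goes through the lift of the $S^1$-action to the principal $\mathrm{Spin}$-bundle (possibly after passing to a double cover of $S^1$), which is global in flavour; for simply connected $\tilde M$ this lift exists, so the conclusion is plausibly still true, but it deserves an argument rather than a citation to the compact case.
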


As explained in the introduction, to prove the theorem, it suffices to show the following lemma.

\begin{lemma}
  \label{sec:manif-with-univ-3}
  Let \(M\) be an oriented, even-dimensional \(S^1\)-manifold whose universal covering is spin. Moreover, let \(F\subset M^H\) be a fixed point component of a subgroup \(H\subset S^1\) which contains \(S^1\)-fixed points.
  Then the following holds:
  \begin{itemize}
  \item \(F\) is orientable.
  \item  An orientation for \(F\) can be chosen as specified in \cite[p. 157-158]{MR954493}.
  \end{itemize}
\end{lemma}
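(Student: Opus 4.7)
The plan is to lift the entire situation to the universal cover $\tilde M$, apply the Bott--Taubes orientation results there, and descend the outcome to $F$. The first step is to observe that, because $F^{S^1}\neq\emptyset$, the orbit of any point $p\in F^{S^1}$ is constant. Hence the induced map $\pi_1(S^1)\to\pi_1(M)$ vanishes, and the standard lifting criterion produces a smooth $S^1$-action on $\tilde M$ covering the original one. Since $S^1$ is connected and $\Aut(\pi_1(M))$ is discrete, conjugation by $S^1$-elements is trivial on $\pi_1(M)$, so the lifted $S^1$-action commutes with the deck transformation action of $\pi_1(M)$ on $\tilde M$.

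Next I will apply Bott--Taubes upstairs. Choose a lift $\tilde p\in\tilde M$ of $p$, which is automatically a fixed point of the lifted $S^1$-action, and let $\tilde F$ denote the component of $\tilde M^H$ containing $\tilde p$. Since $\tilde M$ is spin, Lemma~10.1 of \cite{MR954493} yields orientability of $\tilde F$, and Lemma~9.3 of \cite{MR954493} picks out a preferred orientation of $\tilde F$ determined locally at $\tilde p$ by the $S^1$-weight decomposition of $T_{\tilde p}\tilde M$. Both results are local in nature and remain valid even though $\tilde M$ is possibly non-compact.

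To descend to $F$, a path-lifting argument inside $F$ starting at $p$ shows that $\pi\colon\tilde F\to F$ is a surjective covering map. A deck transformation $g\in\pi_1(M)$ is orientation-preserving on $\tilde M$ (because $M$ is oriented), commutes with the lifted $S^1$-action by the first step, and preserves the spin structure on $\tilde M$ (which is unique as $\tilde M$ is simply-connected and spin). These are exactly the ingredients fed into the Bott--Taubes construction, so $g$ carries the canonical orientation of any component of $\pi^{-1}(F)\cap\tilde M^H$ to the canonical orientation of its image; in particular the stabilizer of $\tilde F$ in $\pi_1(M)$ acts on $\tilde F$ by orientation-preserving diffeomorphisms. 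The orientation on $\tilde F$ therefore descends to an orientation of $F$, and naturality of the construction at $p$ ensures that this descended orientation matches the prescription of Lemma~9.3 of \cite{MR954493}.

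The main obstacle I anticipate is verifying the naturality of the Lemma~9.3 construction under spin- and $S^1$-preserving diffeomorphisms of $\tilde M$. This should come down to inspecting the construction in \cite{MR954493}, which depends only on the orientation of $\tilde M$, the tangential $S^1$-representation at the fixed point, and the complex structures the latter canonically induces on the normal weight spaces; all of these are clearly transported by such diffeomorphisms.
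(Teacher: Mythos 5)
Your approach is genuinely different from the paper's. The paper never lifts anything to the universal cover: it first proves an auxiliary lemma (Lemma~\ref{sec:manif-with-univ}) that being spin on the universal cover is equivalent to the vanishing of \(f^*w_2(TM)\) for all maps \(f\colon S^2\to M\), then works directly on \(M\), constructing an explicit \(S^1\)-equivariant map \(f\colon S^2\to F\) through an \(S^1\)-fixed point, splitting \(TM|_F\) into isotypical summands \(E_0\oplus E_1\oplus\cdots\oplus E_{[k/2]}\), and computing that \(f^*w_2(E_0\oplus E_{k/2})\neq 0\) when \(F\) is non-orientable, contradicting the vanishing from Lemma~\ref{sec:manif-with-univ}. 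You instead lift the \(S^1\)-action to \(\tilde M\) (correctly noting that the existence of a fixed point makes the lifting obstruction vanish and that the lift commutes with the deck group), invoke Bott--Taubes on the spin manifold \(\tilde M\), and descend the orientation along the covering \(\tilde F\to F\).

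Your route is plausible, but it leans on two claims that would need to be nailed down. First, Lemmas~10.1 and 9.3 of Bott--Taubes are stated for closed manifolds, and \(\tilde M\) is non-compact whenever \(\pi_1(M)\) is infinite; you assert they are ``local in nature,'' which is likely true but requires inspecting the proofs rather than just citing them. Second, the descent step requires that a deck transformation \(g\) preserving \(\tilde F\) is orientation-preserving there; your argument invokes preservation of the spin structure, but the cleaner point is that the weight-prescribed orientation at an \(S^1\)-fixed point is determined by (i) the orientation of \(T\tilde M\) and (ii) the complex orientation of the normal weight spaces, both of which are transported by an orientation-preserving, \(S^1\)-equivariant map, and then connectedness of \(\tilde F\) forces the global orientations \(g^*\mathcal O\) and \(\mathcal O\) to coincide. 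With these points filled in your argument would be a valid alternative. What the paper's approach buys is self-containedness: by proving Lemma~\ref{sec:manif-with-univ} and redoing the \(S^2\)-map computation on \(M\) itself, it sidesteps both the non-compactness issue and the need to examine the equivariance of the Bott--Taubes construction; moreover, the explicit \(S^2\)-construction is exactly what is needed to diagnose where Herrera--Herrera's original argument broke (the principal isotropy on the constructed sphere need not be trivial), which is part of the paper's purpose.
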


The basic observation which makes the proof of this lemma possible is the following:

\begin{lemma}
\label{sec:manif-with-univ}
  An orientable manifold \(M\) has spin universal covering if and only if for every map \(f:S^2\rightarrow M\) we have \(f^*w_2(TM)=0\).
\end{lemma}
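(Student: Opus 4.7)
The proof is a direct application of covering space theory together with the fact that, for a simply connected space, $H^2(-;\mathbb{Z}/2)$ is detected by maps from $S^2$.

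Let $p:\tilde{M}\to M$ denote the universal covering. Since $p$ is a local diffeomorphism, $p^*TM\cong T\tilde{M}$, so by naturality of Stiefel--Whitney classes
\begin{equation*}
  w_2(T\tilde{M})=p^*w_2(TM)\in H^2(\tilde{M};\mathbb{Z}/2).
\end{equation*}
Thus $\tilde{M}$ is spin if and only if $p^*w_2(TM)=0$. The lemma will be proved by showing that this vanishing is equivalent to the stated spherical condition.

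For the ``only if'' direction, given any $f:S^2\to M$, the universal property of the covering $p$ together with the simple connectivity of $S^2$ provides a lift $\tilde{f}:S^2\to\tilde{M}$ with $p\circ\tilde{f}=f$. Then $f^*w_2(TM)=\tilde{f}^*p^*w_2(TM)=\tilde{f}^*w_2(T\tilde{M})=0$, since $\tilde{M}$ is spin by assumption.

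For the ``if'' direction, I will show $p^*w_2(TM)=0$ in $H^2(\tilde{M};\mathbb{Z}/2)$. Because $\tilde{M}$ is simply connected, $H_1(\tilde{M};\mathbb{Z})=0$, so the universal coefficient theorem gives an isomorphism $H^2(\tilde{M};\mathbb{Z}/2)\cong \Hom(H_2(\tilde{M};\mathbb{Z}),\mathbb{Z}/2)$, and the Hurewicz theorem identifies $H_2(\tilde{M};\mathbb{Z})$ with $\pi_2(\tilde{M})=\pi_2(M)$. Consequently, a class in $H^2(\tilde{M};\mathbb{Z}/2)$ vanishes if and only if its pullback under every $\tilde{f}:S^2\to\tilde{M}$ vanishes. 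But for any such $\tilde{f}$, setting $f=p\circ\tilde{f}$ gives
\begin{equation*}
  \tilde{f}^*p^*w_2(TM)=(p\circ\tilde{f})^*w_2(TM)=f^*w_2(TM)=0
\end{equation*}
by hypothesis. Hence $p^*w_2(TM)=0$, i.e.\ $\tilde{M}$ is spin.

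No step is genuinely hard; the only thing to keep in mind is that the hypothesis is phrased in terms of maps from $S^2$ into $M$ (not into $\tilde{M}$), which is reconciled by the two standard lifting facts used above: every map $S^2\to M$ lifts to $\tilde{M}$, and conversely every map $S^2\to\tilde{M}$ yields a map $S^2\to M$ by composition with $p$.
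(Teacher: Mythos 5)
Your proof is correct and takes essentially the same route as the paper: identify $w_2(T\tilde{M})=p^*w_2(TM)$, use the Hurewicz and universal coefficient theorems to detect classes in $H^2(\tilde M;\mathbb{Z}/2)$ by maps from $S^2$, and use the lifting property of the universal cover to pass between maps $S^2\to M$ and $S^2\to\tilde M$. The paper phrases the two directions more tersely as a single chain of equivalences, but the content is identical.
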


Here \(w_2(E)\in H^2(B;\mathbb{Z}_2)\) denotes the second Stiefel--Whitney class of the vector bundle \(E\rightarrow B\).
This lemma is well known (see \cite[p. 88-89]{MR1031992}).
But for the sake of completeness we also give a proof here.

\begin{proof}
  We may assume that \(M\) is connected.
  Let \(p:\tilde{M}\rightarrow M\) be the universal covering of \(M\).
  Then by the Hurewicz theorem there is an isomorphism \(\pi_2(\tilde{M})\rightarrow H_2(\tilde{M};\mathbb{Z})\).
  Moreover, by the universal coefficient theorem there is an isomorphism
  \[H^2(\tilde{M};\mathbb{Z}_2)\cong \hom (H_2(M;\mathbb{Z});\mathbb{Z}_2)\cong \hom(\pi_2(\tilde{M}),\mathbb{Z}_2).\]
  Therefore \(\tilde{M}\) is spin if and only if for every map \(f:S^2\rightarrow \tilde{M}\) we have
  \[0=f^*w_2(T\tilde{M})=f^*p^*w_2(TM)=(p\circ f)^*w_2(TM).\]
  Since \(p\) is a covering we also have that \(p_*:\pi_2(\tilde{M})\rightarrow \pi_2(M)\) is an isomorphism.
  Hence, the claim follows.
\end{proof}

\begin{proof}[Proof of Lemma~\ref{sec:manif-with-univ-3}]
Using Lemma~\ref{sec:manif-with-univ} one can modify the proof of Lemma 9.3 of \cite{MR954493} slightly to see that this lemma holds for the tangent bundle of \(M\). Therefore the second claim in Lemma \ref{sec:manif-with-univ-3} follows once the first claim is shown.
So we concentrate on the proof of the first claim.

  Let \(M\) be an oriented, even-dimensional \(S^1\)-manifold whose universal covering is spin.
  Let \(F\subset M^H\) be a fixed point component of \(H=\mathbb{Z}_k\subset S^1\) which contains an \(S^1\)-fixed point.
  We can assume without loss of generality that \(H\) is the principal isotropy group of the induced \(S^1\)-action on \(F\).
The irreducible real \(H\)-representations are given by
\[V_0,V_1,\dots,V_{[k/2]},\]
where \(V_0\) is the trivial one-dimensional representation, for \(0<l<k/2\) we have \(V_l=\mathbb{C}\) on which an element \(z\in H\subset S^1\subset \mathbb{C}\) acts by complex multiplication with \(z^l\).
If, moreover, \(k\) is even then \(V_{k/2}=\mathbb{R}\) on which a generator of \(H\) acts by multiplication with \(-1\).

Accordingly there is a equivariant splitting
\[TM|_F=E_0\oplus E_1\oplus\dots\oplus E_{[k/2]}\]
into \(S^1\)-invariant subbundles, such that for each \(x\in F\), \(E_i|_x\cong V_i^{l_i}\) as \(H\)-represen\-ta\-tions for some \(l_i\geq 0\) and \(i=0,\dots,[k/2]\).
Note that \(E_0=TF\). Moreover, each \(E_i\), \(0<i<k/2\), has an \(S^1\)-invariant complex structure.

So if \(k\) is odd, orientability of \(F\) follows from the orientability of \(M\).
Therefore assume that \(k\) is even.
As in the proof of Lemma 1 of \cite{MR1979364}, we construct an \(S^1\)-equivariant map
\[f:S^2\rightarrow F,\]
such that
\begin{enumerate}
\item \(S^1\) acts on \(S^2\) by rotation with principal isotropy group \(H\) and fixed points \(N,S\).
\item \(f(N)=f(S)\in F^{S^1}\),
\item \(f^*w_2(E_0\oplus E_{k/2})\neq 0\) if \(F\) is not orientable.
\end{enumerate}

To construct \(f\) we start with an equivariant embedding
\[g:S^1\times (S^1/H)\hookrightarrow F,\]
here \(S^1\) acts on the second factor of \(S^1\times (S^1/H)\) by multiplication.
By moving the orbit \(g(\{1\}\times (S^1/H))\) into an \(S^1\)-fixed point, we can homotope \(g\) to a equivariant map
\[g_1:S^1\times (S^1/H)\rightarrow F,\]
which is an embedding on \((S^1\times (S^1/H))\setminus (\{1\}\times (S^1/H))\) and collapses \(\{1\}\times (S^1/H)\) to an \(S^1\)-fixed point.
Therefore there is a map \(f:S^2\rightarrow F\) with the properties (1) and (2) and such that \(\image f=\image g_1=:A\).
Using Mayer-Vietoris sequences one sees that
\begin{align*}
  g_1^*:H^2(A;\mathbb{Z}_2)&\rightarrow H^2(S^1\times (S^1/H);\mathbb{Z}_2),& f^*:H^2(A;\mathbb{Z}_2)&\rightarrow H^2(S^2;\mathbb{Z}_2)
\end{align*}
are isomorphisms.
Hence it suffices to show that we can choose \(g\) such that \(g^*w_2(E_0\oplus E_{k/2})=g_1^*w_2(E_0\oplus E_{k/2})\neq 0\) if \(F\) is non-orientable.

Note that, since the \(S^1\)-action on \(M\) is orientation preserving, the codimension of all \(H'\)-fixed point components for \(H'\subset S^1\) is even.
Therefore the inclusion \(F_0\hookrightarrow F\) is one-connected.
Here \(F_0\) denotes the union of all principal orbits in \(F\).
Note that \(F_0\) is an open and dense subset of \(F\).
Hence, if \(F\) is non-orientable, then \(F_0\) is also non-orientable.
Note that \(\pi:F_0\rightarrow F_0/S^1\) is a principal \((S^1/H)\)-bundle.

Therefore we have \begin{equation}\label{eq:4}E_0|_{F_0}=TF_0=\pi^*(T(F_0/S^1))\oplus \mathbb{R}.\end{equation}
Hence, it follows that \(F_0/S^1\) is a non-orientable manifold.
Choose an embedding \(g':S^1\hookrightarrow F_0/S^1\) such that \({g'}^*w_1(T(F_0/S^1))=a\) is a generator of \(H^1(S^1;\mathbb{Z}_2)\).

Then the principal  \((S^1/H)\)-bundle \(F_0|_{g'(S^1)}\) is trivial.
Therefore \(g'\) lifts to an \(S^1\)-equivariant embedding
\[g:S^1\times (S^1/H)\hookrightarrow F_0.\]

It follows from (\ref{eq:4}) that
\begin{align*}
  g^*E_0=g^*TF_0=g^*\pi^* (T(F_0/S^1))\oplus \mathbb{R} =\pi^*{g'}^*(T(F_0/S^1))\oplus \mathbb{R}.
\end{align*}
Hence we have
\begin{align*}
  g^*w_1(E_0)&=a\in H^*(S^1\times (S^1/H);\mathbb{Z}_2)=\Lambda_{\mathbb{Z}_2}(a,b)&
                                                                                     g^*w_2(E_0)=0.
\end{align*}

Moreover, since \(S^1\) acts transitively on \(S^1/H\), we have \(g^*E_{k/2}=E\otimes \gamma\) where \(E\) is a vector bundle over the first factor \(S^1\) and \(\gamma=S^1\times_HV_{k/2}\).
Note that \(E_{k/2}\) and \(E\) have even rank since \(F\) has even codimension in \(M\).
Hence, we have \(w_1(E)=g^*w_1(E_{k/2})\) and \(g^*w_2(E_{k/2})=w_1(E)w_1(\gamma)=w_1(E)b\).

Since \(E_0\oplus E_{k/2}\) is orientable, we have \(w_1(E)=g^*w_1(E_{k/2})=g^*w_1(E_0)=a\) and \(g^*w_2(E_{k/2})=ab\).
Therefore we get \[g^*w_2(E_0\oplus E_{k/2})=g^*w_2(E_{k/2})+a^2=g^*w_2(E_{k/2})=ab\neq 0.\]
This completes the construction of \(f\).

Since \(E'=\bigoplus_{i=1}^{k/2-1}E_i\) has an \(S^1\)-equivariant complex structure, we deduce from (2) and Lemma 9.1 of \cite{MR954493} that \(f^*c_1(E')=0\) and hence \(f^*w_2(E')=0\).

By Lemma~\ref{sec:manif-with-univ}, it follows that
\[0=f^*w_2(TM)=f^*w_2(E')+f^*w_2(E_0\oplus E_{k/2})=f^*w_2(E_0\oplus E_{k/2}).\]
Hence, by (3), \(F\) is orientable and the claim is proved.
\end{proof}

\begin{remark}
  The examples of the manifolds \(M\) and \(M_2\) in Section~\ref{sec:count-lemma-1} show that the first claim in Lemma \ref{sec:manif-with-univ-3} becomes false when one removes one of the assumptions that the universal covering of \(M\) is spin or that \(F\) contains a fixed point.
\end{remark}

Using the arguments of \cite[Section 1.5]{MR1066570} or \cite[Section 1.4]{MR1979364} we get:

\begin{corollary}
  \label{sec:manif-with-univ-4}
    Let \(M\) be a closed, connected, oriented manifold whose universal covering is spin such that \(S^1\) acts non-trivially on \(M\).
  Then the \(\hat{A}\)-genus of \(M\) vanishes.
\end{corollary}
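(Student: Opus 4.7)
The plan is to pass from rigidity of the equivariant elliptic genus (Theorem~\ref{sec:manif-with-univ-1}) to vanishing of the \(\hat{A}\)-genus via the classical cusp-specialization strategy of Hirzebruch--Slodowy \cite{MR1066570}, used also in \cite{MR1979364}. First, \(\hat{A}(M) = 0\) trivially unless \(\dim M\) is divisible by \(4\), so I assume \(\dim M = 4n \geq 4\).

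By Theorem~\ref{sec:manif-with-univ-1}, the equivariant elliptic genus \(\phi_{S^1}[M] \in R(S^1)[[q]] \otimes \mathbb{Q}\) is rigid and reduces to the constant-in-\(\lambda\) series \(\phi[M] \in \mathbb{Q}[[q]]\). I would then view \(\phi[M]\) as the \(q\)-expansion at the cusp \(\tau = i\infty\) of a weight-\(2n\) modular form on \(\Gamma_0(2)\); a standard computation (cf.\ \cite{MR1189136}) identifies, up to a universal constant of the form \(2^{2n}\), the expansion at the second cusp (obtained via \(\tau \mapsto -1/\tau\)) with a genus whose constant term is \(\hat{A}(M)\). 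It therefore suffices to show that this second-cusp constant term vanishes.

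Substituting \(\lambda = e^{2\pi i z}\) and \(q = e^{2\pi i \tau}\), rigidity says \(\phi_{S^1}[M](z,\tau)\) is independent of \(z\). On the other hand, the Atiyah--Bott--Segal--Singer Lefschetz fixed point formula expresses \(\phi_{S^1}[M](z,\tau)\) as a sum over components \(F \subset M^{S^1}\) of local contributions built from theta functions evaluated at the arguments \(mz\), where \(m\) ranges over the \(S^1\)-weights on the normal bundle of \(F\). Since the \(S^1\)-action on the connected manifold \(M\) is non-trivial, \(M^{S^1} \neq M\), and hence every such \(F\) carries at least one non-zero normal weight; this is the crucial geometric input.

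Applying the modular transformation and extracting the constant term at the second cusp turns the localization identity into a finite sum of equivariant \(\hat{A}\)-type indices, one per \(F\), twisted by bundles built from the normal data at \(F\). A direct examination in the spirit of the original Atiyah--Hirzebruch argument shows that, at a component with non-zero normal weights, the corresponding contribution is a Laurent series in \(\lambda\) with no \(\lambda^0\) term; by rigidity the total is a constant, which must therefore equal zero, giving \(\hat{A}(M) = 0\). The main obstacle I expect is the careful bookkeeping of the theta-function factors under the modular transformation and the verification that each local contribution genuinely lacks a \(\lambda^0\) term; this is precisely the analysis carried out in \cite[Sec.~1.5]{MR1066570} and \cite[Sec.~1.4]{MR1979364}, with Theorem~\ref{sec:manif-with-univ-1} supplying the rigidity hypothesis that drives it.
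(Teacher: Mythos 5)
Your proposal is correct and follows essentially the same route as the paper: the paper's entire proof of this corollary is the single sentence ``Using the arguments of [Hirzebruch--Slodowy, Section~1.5] or [Herrera--Herrera, Section~1.4] we get:'', invoking the rigidity theorem (Theorem~\ref{sec:manif-with-univ-1}) as input, which is exactly the cusp-specialization strategy you sketch and the same references you cite at the end. One small imprecision worth noting: a local fixed-point contribution is a rational function of \(\lambda\), not globally a Laurent series with no \(\lambda^0\) term; the correct statement is that its Laurent expansion about \(\lambda=\infty\) (equivalently \(\lambda=0\)) has vanishing constant term, so the rigid (hence constant) total must vanish --- but this is a matter of phrasing rather than a gap, and it is precisely the analysis carried out in the cited sources.
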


For a manifold \(M\), denote by \(h_{M2}:\pi_2(M)\rightarrow H_2(M;\mathbb{Z})\) the Hurewicz map which sends a homotopy class of maps \(f:S^2\rightarrow M\) to the homology class \(f_*[S^2]\), where \([S^2]\) denotes the fundamental class of \(S^2\).
Note that Lemma~\ref{sec:manif-with-univ} can be used to prove the following sufficient conditions for the universal covering being spin.

\begin{lemma}
  \label{sec:manif-with-univ-2}
  A connected, orientable manifold \(M\) has spin universal covering if one of the following three conditions holds:
  \begin{enumerate}
  \item \(\image h_{M2}\) is finite and of odd order.
  \item \(\image h_{M2}\) is finite and \(M\) is spin$^c$.
  \item \(M\) is spin$^c$ and \(c_1(M)\) is torsion.
  \end{enumerate}
\end{lemma}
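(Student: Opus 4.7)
The plan is to reduce everything to Lemma~\ref{sec:manif-with-univ} and then dispatch each of the three conditions by a short pairing argument. By Lemma~\ref{sec:manif-with-univ}, it suffices to show that for every continuous map $f:S^2\to M$ one has $f^*w_2(TM)=0$ in $H^2(S^2;\mathbb{Z}_2)$. Evaluating on the fundamental class of $S^2$, this is equivalent to showing that the Kronecker pairing $\langle w_2(TM),\beta\rangle$ vanishes for every $\beta$ in the image of the Hurewicz map $h_{M2}$. Writing $\rho_2:H_*(M;\mathbb{Z})\to H_*(M;\mathbb{Z}_2)$ and $\rho_2:H^*(M;\mathbb{Z})\to H^*(M;\mathbb{Z}_2)$ for reduction mod $2$, the game is to check that $\langle w_2(TM),\rho_2\beta\rangle=0$ for every $\beta\in\image h_{M2}$.

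For case (1), every $\beta\in\image h_{M2}$ has odd order in $H_2(M;\mathbb{Z})$; since $\rho_2$ factors through $H_2(M;\mathbb{Z})\otimes\mathbb{Z}_2$, and an element of odd order dies under tensoring with $\mathbb{Z}_2$, its mod $2$ reduction is trivial, so the pairing vanishes. For case (2), the spin$^c$ hypothesis provides an integral lift $c\in H^2(M;\mathbb{Z})$ of $w_2(TM)$. Then $\langle w_2(TM),\rho_2\beta\rangle=\rho_2\langle c,\beta\rangle$, and since $\beta$ is torsion (as $\image h_{M2}$ is finite) while the integral Kronecker pairing factors through $H_2(M;\mathbb{Z})/\text{torsion}$, we get $\langle c,\beta\rangle=0$. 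For case (3), pick a spin$^c$ integral lift $c_1(M)\in H^2(M;\mathbb{Z})$ of $w_2(TM)$; the pairing with $f_*[S^2]$ equals $\langle f^*c_1(M),[S^2]\rangle$, but $f^*c_1(M)$ is a torsion class in $H^2(S^2;\mathbb{Z})\cong\mathbb{Z}$, hence zero.

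I expect no genuine obstacle; the only subtlety is to be careful that in cases (2) and (3) the spin$^c$ lift one chooses is actually an integral lift of $w_2(TM)$, not of $c_1(M)$ as a distinct class, and to note that in case (3) one does not need any finiteness of $\image h_{M2}$ because the cohomology argument over $S^2$ bypasses it. Finally, I would close by remarking that condition (1) is a strict strengthening of neither (2) nor (3), so the three conditions are genuinely complementary sufficient criteria.
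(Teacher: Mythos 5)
Your proof is correct and takes essentially the same approach as the paper: reduce to showing $f^*w_2(TM)=0$ via Lemma~\ref{sec:manif-with-univ}, then exploit that the sphere class is (odd) torsion in case (1) and that $w_2(TM)$ lifts integrally in cases (2) and (3). The only cosmetic difference is that you argue via the Kronecker pairing directly, whereas the paper precomposes $f$ with a degree-$|\image h_{M2}|$ self-map of $S^2$ to kill the class; these are two ways of saying the same thing.
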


Here a oriented manifold \(M\)  is called spin$^c$ if \(w_2(TM)\) is contained in the image of the natural map \(H^2(M;\mathbb{Z})\rightarrow H^2(M;\mathbb{Z}_2)\). Moreover, if this is the case then for every preimage \(b\in H^2(M;\mathbb{Z})\) of \(w_2(TM)\) there is a spin$^c$ structure on \(M\) with \(c_1(M)=b\) (see \cite[Appendix D]{MR1031992}).
 
\begin{proof}[Proof of Lemma \ref{sec:manif-with-univ-2}]
  Let \(f:S^2\rightarrow M\) be a map. By Lemma~\ref{sec:manif-with-univ}, we have to show that under any of the three conditions \(f^*w_2(TM)=0\).
  
  First assume that (3) holds.
  Then since \(H^2(S^2;\mathbb{Z})\) is torsion-free, we have \(0=f^*c_1(M)\equiv f^*w_2(TM) \mod 2\).

  Next assume that \(\image h_{M2}\) is finite. Let \(p:S^2\rightarrow S^2\) be a map of degree \(|\image h_{M2}|\). Then \(f\circ p\) is null homologous. Hence we have
  \[0=p^*f^*w_2(TM)=|\image h_{M2}|f^*w_2(TM).\]
  So, if \(|\image h_{M2}|\) is odd, the claim follows.
  
  If \(M\) is spin$^c$, then we can do the above calculation with \(c_1(M)\) instead of \(w_2(TM)\) and conclude that \(0=f^*c_1(M)\equiv f^*w_2(TM) \mod 2\).
\end{proof}

Lemma~\ref{sec:manif-with-univ-2} together with Theorem~\ref{sec:manif-with-univ-1} and Corollary~\ref{sec:manif-with-univ-4} gives the following corollary.

\begin{corollary}
  \label{sec:manif-with-univ-5}
  Let \(M\) be a closed, connected, oriented, even-dimensional \(S^1\)-manifold such that one of the following conditions is satisfied:
    \begin{enumerate}
  \item \(\image h_{M2}\) is finite and of odd order.
  \item \(\image h_{M2}\) is finite and \(M\) is spin$^c$.
  \item \(M\) is spin$^c$ and \(c_1(M)\) is torsion.
  \end{enumerate}
  Then the equivariant elliptic genus of \(M\) is rigid. Moreover, the \(\hat{A}\)-genus of \(M\) vanishes if the action is non-trivial.
\end{corollary}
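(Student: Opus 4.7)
The plan is to observe that Corollary~\ref{sec:manif-with-univ-5} is a direct consequence of the three results already established in this section, combined in sequence. There is no genuine new work to do; the corollary is essentially a repackaging.

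First I would verify that the three conditions listed in the corollary are literally the same three conditions appearing in Lemma~\ref{sec:manif-with-univ-2}. Since $M$ is assumed closed, connected, and oriented, Lemma~\ref{sec:manif-with-univ-2} applies verbatim and yields that the universal covering $\tilde{M}$ of $M$ admits a spin structure. This is the key reduction, and it is immediate from the setup.

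Next, with the hypothesis that $\tilde{M}$ is spin now in hand, I would invoke Theorem~\ref{sec:manif-with-univ-1} (equivalently Theorem~\ref{sec:main}), which asserts precisely the rigidity of the equivariant elliptic genus of a closed, oriented, even-dimensional $S^1$-manifold whose universal covering is spin. This gives the first assertion of the corollary.

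For the second assertion, assuming the $S^1$-action is non-trivial, I would apply Corollary~\ref{sec:manif-with-univ-4} (equivalently Corollary~\ref{sec:introduction-2}), which states that under the same spin-universal-covering hypothesis a non-trivial $S^1$-action forces the $\hat{A}$-genus to vanish. Since every hypothesis needed by that corollary is already supplied, the conclusion follows. I expect no obstacle at all: the proof is a two-line chain of implications and the hard work has already been done in establishing Lemma~\ref{sec:manif-with-univ-2}, Theorem~\ref{sec:manif-with-univ-1}, and Corollary~\ref{sec:manif-with-univ-4}.
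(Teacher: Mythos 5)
Your proof is correct and matches the paper's own reasoning exactly: the paper states the corollary as an immediate consequence of Lemma~\ref{sec:manif-with-univ-2}, Theorem~\ref{sec:manif-with-univ-1}, and Corollary~\ref{sec:manif-with-univ-4}, precisely as you chain them.
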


Conditions (1) and (2) in this corollary are quite close to the original condition of Herrera and Herrera for the rigidity of elliptic genera. However, the example of Amann and Dessai \cite{MR2600123} and Corollary~\ref{sec:equivariant-surgery-1} below show that the conditions after the ``and'' can not be removed.

\section{Equivariant surgery}
\label{sec:equivariant-surgery}

In this section we prove a refined version of Theorem~\ref{sec:introduction-1} from the introduction.
To do so we have to set up some notations.

Let \(M\) be a connected manifold with universal covering \(p:\tilde{M}\rightarrow M\).
Then from the Hurewicz theorem we get an isomorphism
\[\pi_2(M)\cong\pi_2(\tilde{M})\cong H_2(\tilde{M};\mathbb{Z}).\]
Since \(\pi_1(M)\) is acting on \(\tilde{M}\), we get a \(\mathbb{Z}[\pi_1(M)]\)-module structure on \(\pi_2(M)\).
Here \(\mathbb{Z}[\pi_1(M)]\) is the group ring of \(\pi_1(M)\).
In the rest of this section this \(\mathbb{Z}[\pi_1(M)]\)-module structure on \(\pi_2(M)\) is assumed.

If \(\dim M\geq 4\) and \(M'\) is constructed from \(M\) by taking the connected sum of \(M\) with a simply connected manifold \(N\) in a point \(x\in M\), then the universal covering  \(\tilde{M}'\) of \(M'\) is constructed from \(\tilde{M}\) by taking the connected sum with copies of \(N\) in every point in \(p^{-1}(x)\).
In particular,
\[\pi_1(M')\cong \pi_1(M),\quad\quad\quad \pi_2(M')\cong \pi_2(M)\oplus \mathbb{Z}[\pi_1(M')]\otimes \pi_2(N).\]

Similarly, if \(\dim M\geq 6\) and \(M'\) is constructed from \(M\) by surgery on an embedded sphere \(S^2\subset M\), then \(p^{-1}(S^2)\) is the disjoint union of embedded spheres in \(\tilde{M}\) and \(\tilde{M}'\) is constructed from \(\tilde{M}\) by surgery  on all of these spheres.
In particular,
\[\pi_1(M')\cong \pi_1(M),\quad\quad\quad \pi_2(M')\cong \pi_2(M)/\mathbb{Z}[\pi_1(M')][\phi],\]
where \(\phi:S^2\rightarrow M\) is the inclusion of the embedded sphere on which we do surgery.

Now we can state and prove the main result of this section.

\begin{theorem}
  \label{sec:equivariant-surgery-3}
  Let \(G\) be a finitely presented group and \(V\) a \(\mathbb{Z}[G]\)-module such that there is a closed, connected, oriented manifold \(P\) of dimension \(4n\geq 8\) with non-spin universal covering and
  \begin{equation}
    \label{eq:3}
\pi_1(P)\cong G,\quad\quad\quad \pi_2(P)\cong V.
\end{equation}

  Then there exists a closed, connected, oriented, effective \(S^1\)-manifold \(M\) of dimension \(4n\) such that
  \begin{itemize}
  \item \(\pi_1(M)\cong G\), \(\pi_2(M)\cong V\).
  \item The universal covering of \(M\) is non-spin.
  \item \(M\) is equivariantly bordant to a linear \(S^1\)-action on \(\mathbb{C} P^{2n}\). In particular, \(\hat{A}(M)=\hat{A}(\mathbb{C} P^{2n})\neq 0\).
  \end{itemize}
\end{theorem}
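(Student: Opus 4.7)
The plan is to start from $X_0=\mathbb{C}P^{2n}$ equipped with a linear, effective $S^1$-action (simply connected, with non-spin universal cover, $\hat{A}(X_0)\neq 0$) and modify it by a sequence of equivariant surgeries. Since every equivariant surgery is the boundary of its handle trace, the resulting manifold $M$ is automatically equivariantly bordant to $X_0$, so $\hat{A}(M)=\hat{A}(\mathbb{C}P^{2n})\neq 0$ and the $S^1$-action stays effective. A free $S^1$-orbit has an equivariant tubular neighborhood $S^1\times D^{4n-1}$ on which $S^1$ acts only on the first factor, so one may attach equivariant $k$-handles $S^1\times D^k\times D^{4n-k}$ along equivariant embeddings of $S^1\times S^{k-1}\times D^{4n-k}$ into the free locus; the trace of each such attachment is an equivariant cobordism.

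First I would install the fundamental group $G$ and a copy of $V$ in $\pi_2$ simultaneously, by equivariantly connect-summing $X_0$ with $S^1\times P$, letting $S^1$ act on the product by translation of the first factor. Picking any free orbit $O\subset X_0$ and the free orbit $O'=S^1\times\{q\}\subset S^1\times P$, both have equivariant tubular neighborhoods of the form $S^1\times D^{4n-1}$, so the equivariant $1$-surgery is well defined; moreover $S^1\times P$ bounds equivariantly as $\partial(D^2\times P)$, so the resulting manifold $X_1$ is equivariantly bordant to $X_0$. Van Kampen applied to the decomposition $(X_0\setminus O)\cup_{S^1\times S^{4n-2}}((S^1\times P)\setminus O')$, together with the fact that removing a codimension-$(4n-1)$ orbit does not change $\pi_1$, gives
\[
\pi_1(X_1)\cong(\mathbb{Z}\times G)/\langle(1,e)\rangle\cong G,
\]
and a Mayer--Vietoris computation in the universal cover exhibits $V=\pi_2(P)$ as a $\mathbb{Z}[G]$-summand of $\pi_2(X_1)$.

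Next I would kill the excess summand of $\pi_2(X_1)$ by equivariant $2$-surgery. The excess consists of the class of a $\mathbb{C}P^1$-generator of $\pi_2(X_0)$ together with a contribution coming from the augmentation ideal of $\mathbb{Z}[G]$ picked up by Mayer--Vietoris. For each excess $\mathbb{Z}[G]$-generator, represent it by a $2$-sphere in $X_1$, make it into an embedding by general position (possible since $4n-2\geq 6$), and push it into the free locus of the $S^1$-action by equivariant transversality. Performing equivariant $2$-surgery on the resulting equivariantly embedded spheres and using the $\pi_2$-formula recalled just before the theorem, one reduces $\pi_2$ to $V$ while leaving $\pi_1$ unchanged. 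Call the resulting manifold $M$.

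To finish, note that the universal cover of $M$ is non-spin: fix a linearly embedded $\mathbb{C}P^1\subset X_0\subset M$ kept disjoint from every surgery locus by transversality. Its inclusion $f\colon S^2\to M$ factors through the region in which $TM$ agrees with $T\mathbb{C}P^{2n}$, and $w_2(T\mathbb{C}P^{2n})=(2n+1)H\bmod 2$ pulls back to the generator of $H^2(S^2;\mathbb{Z}_2)$, so $f^*w_2(TM)\neq 0$ and Lemma~\ref{sec:manif-with-univ} gives that $\widetilde{M}$ is non-spin. The main obstacle is the $\pi_2$-reduction step: one must verify that all excess $\mathbb{Z}[G]$-generators can simultaneously be realised by equivariantly embedded spheres in the free locus with trivial equivariant normal bundle, while preserving the non-spin witness $\mathbb{C}P^1$ and without modifying $\pi_1$. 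This is a Wall-type surgery bookkeeping carried out equivariantly.
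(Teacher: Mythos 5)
Your overall strategy (equivariant surgery starting from a linear action on $\mathbb{C}P^{2n}$, so that each step is an equivariant bordism) matches the paper's, but there are two genuine gaps and one conceptual omission.

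First, a dimension error: you propose to equivariantly connect-sum $X_0=\mathbb{C}P^{2n}$ with $S^1\times P$, but $P$ has dimension $4n$, so $S^1\times P$ has dimension $4n+1$; you cannot take a connected sum of manifolds of different dimensions, and removing a free orbit $O\subset X_0$ gives a boundary $S^1\times S^{4n-2}$ which does not match the boundary $S^1\times S^{4n-1}$ you would get from removing a free orbit in $S^1\times P$. The paper resolves this with Lemma~\ref{sec:equivariant-surgery-2}, which replaces $P$ by a $(4n-1)$-manifold $P'$ (a middle level set of a self-indexing Morse function) with the same $\pi_1$, $\pi_2$, and spin-ness of the universal cover. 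Only then does $S^1\times P'$ have the right dimension, and only then is the equivariant connected sum along a free orbit defined. This lemma is not a routine detail; without it the construction does not start.

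Second, your argument that the universal cover of the final $M$ is non-spin is self-contradictory. You plan to kill ``the class of a $\mathbb{C}P^1$-generator of $\pi_2(X_0)$'' as excess and simultaneously claim that a linearly embedded $\mathbb{C}P^1$, kept geometrically disjoint from the surgery loci, still has $f^*w_2(TM)\neq 0$. But $f^*w_2(TM)$ depends only on the free homotopy class of $f$; if the $\mathbb{C}P^1$-class is killed then $f$ becomes null-homotopic in $M$ and $f^*w_2(TM)=0$, regardless of where the embedded sphere sits. The paper's actual fix is not to kill the generator $a$ coming from $\pi_2(\mathbb{C}P^{2n})$, but rather, using Lemma~\ref{sec:manif-with-univ} and the hypothesis that $\widetilde{P}$ is non-spin, to pick a witness class $c\in\pi_2(P')\cong V$ with $w_2(TP')(c)=w_2(T(N_0/S^1))(a)$ and to do surgery on $c+a$. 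This identifies the image of the $\mathbb{C}P^1$-class with $-c\in V$ instead of annihilating it, which is what preserves non-spinness while still reducing $\pi_2$ to $V$. Your proposal contains no mechanism for this identification, and, as stated, your surgery scheme would in fact produce a manifold whose universal cover is spin whenever there is no compatible choice of $c$; the hypothesis that $\widetilde{P}$ is non-spin is used exactly here and your argument never invokes it.

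Finally, a point you gloss over: equivariant $2$-surgery is governed by the exact sequence
\begin{equation*}
0\rightarrow\pi_2(N_0)\stackrel{p_*}{\longrightarrow}\pi_2(N_0/S^1)\stackrel{c_1(N_0)}{\longrightarrow}\mathbb{Z}\rightarrow 0,
\end{equation*}
so the correct bookkeeping takes place in $\pi_2$ of the orbit space, not of the manifold, and only classes in the kernel of $c_1(N_0)$ lift to equivariantly embedded $S^1\times S^2$'s that can be surgered. This is why the paper keeps track of two generators $a,b$ of $\pi_2(N_0/S^1)$ with $c_1(a)=0$, $c_1(b)=1$, kills $(1-g_i)b$ (which are in the kernel of $c_1$), and must keep one copy of $\mathbb{Z}b$ in $\pi_2(M_0/S^1)$; the desired $\pi_2(M)=V$ is then read off from the exact sequence, not from $\pi_2$ of the orbit space. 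Your appeal to ``pushing spheres into the free locus by equivariant transversality'' does not by itself guarantee the $c_1=0$ condition or account for the $\mathbb{Z}b$ summand that cannot be removed.
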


\begin{proof}
  Recall first that if \(M\) is a manifold and \(c\in H^2(M;R)\) for some ring \(R\) then \(c\) induces a homomorphism
  \[\pi_2(M)\rightarrow R,\quad\quad\quad [f]\mapsto f^*c[S^2].\]
  We denote this homomorphism also by \(c\).
  Note that the homomorphism \(c\) is invariant under the action of \(\pi_1(M)\) on \(\pi_2(M)\).

  Next we recall some properties of \(S^1\)-manifolds and their orbit spaces.

  Let \(N\) be a connected effective \(S^1\)-manifold of dimension \(m\geq 7\) such that
  \begin{enumerate}
  \item For each non-trivial \(H\subset S^1\), \(\codim M^H \geq 4\).
  \item There are \(S^1\)-fixed points in \(N\).
  \end{enumerate}

  We denote by \(N_0\) the union of principal orbits in \(N\).
  Then \(N_0\) is an open dense subset of \(N\) and by (1) the inclusion \(\iota:N_0\hookrightarrow N\) is 3-connected.
  Moreover, by (2), the map
  \[\mathbb{Z}=\pi_1(S^1)\rightarrow \pi_1(N_0)\cong \pi_1(N)\]
  induced by the inclusion of an orbit is trivial.

  By the exact homotopy sequence for the fibration \(p:N_0\rightarrow N_0/S^1\) we see that there is a exact sequence
  \[0\rightarrow \pi_2(N_0)\stackrel{p_*}{\rightarrow} \pi_2(N_0/S^1)\stackrel{c_1(N_0)}{\rightarrow} \mathbb{Z}\rightarrow 0.\]
  Here we have identified the connecting homomorphism \(\pi_2(N_0/S^1)\rightarrow \pi_1(S^1)\) in the exact sequence with the homomorphism induced by the first Chern class \(c_1(N_0)\in H^2(N_0/S^1;\mathbb{Z})\) of the \(S^1\)-bundle \(N_0\rightarrow N_0/S^1\).
  Moreover, we see that \(p_*:\pi_1(N_0)\rightarrow \pi_1(N_0/S^1)\) is an isomorphism.

  Since \(p^*T(N_0/S^1)\) is stably isomorphic to \(TN_0\) we see that for \(a\in \pi_2(N_0)\) we have
  \[w_2(TN)(\iota_*a)=w_2(TN_0)(a)=w_2(T(N_0/S^1))(p_*a).\]

  Now let \(\phi:S^2\times D^{m-3}\hookrightarrow N_0/S^1\), such that \(c_1(N_0)([\phi|_{S^2\times\{0\}}])=0\).
  Then there is an equivariant embedding
  \[\tilde{\phi}:S^1\times S^2\times D^{m-3}\hookrightarrow N_0\hookrightarrow N,\]
  where \(S^1\) acts on the first factor by multiplication, covering \(\phi\).

  In this situation we can do equivariant surgery on \(\tilde{\phi}\), i.e. construct an \(S^1\)-manifold
  \[N'=(N\setminus \tilde{\phi}(S^1\times S^2\times \dot{D}^{m-3}))\cup_{S^1\times S^2\times S^{m-4}} S^1\times D^{3}\times S^{m-4}\]
  by equivariant gluing.
  On the level of orbit spaces \(N_0'/S^1\) is constructed from \(N_0/S^1\) by surgery on \(\phi\).
  Moreover, \(N'\) is equivariantly bordant to \(N\) and satisfies (1) and (2) from above.

  In particular we have \(\pi_1(N_0'/S^1)\cong \pi_1(N_0/S^1)\) and
  \[\pi_2(N_0'/S^1)\cong \pi_2(N_0/S^1)/\mathbb{Z}[\pi_1(N_0/S^1)][\phi|_{S^2\times \{0\}}].\]
  Moreover, \(c_1(N_0')\) and \(w_2(T(N_0'/S^1))\)  are the homomorphisms induced from \(c_1(N_0)\) and \(w_2(T(N_0/S^1))\)  on this quotient, respectively.

  After these general remarks we describe the construction of \(M\).
  Let \(N=\mathbb{C}P^{2n}\) equipped with a linear effective \(S^1\)-action such that (1) and (2) hold.
  Moreover, let \(P'\) be the \((4n-1)\)-manifold from Lemma \ref{sec:equivariant-surgery-2} below.
  Then (\ref{eq:3}) holds for \(P'\) and the universal covering of \(P'\) is non-spin.

  Let \(\phi_1: D^{4n-1}\hookrightarrow N_0/S^1\) and \(\phi_2:D^{4n-1}\hookrightarrow P'\) be embeddings. Then form the equivariant connected sum
  \[N'=(N\setminus p^{-1}(\phi_1(\dot{D}^{4n-1})))\cup_{S^{4n-2}\times S^1} (P'\setminus \phi_2(\dot{D}^{4n-1}))\times S^1.\]

  Then \(N'\) satisfies (1) and (2) and is equivariantly bordant to \(N\) since \(P'\times S^1\) is an equivariant boundary.
  On the level of orbit spaces \(N_0'/S^1\) is the connected sum of \(N_0/S^1\) and \(P'\).
  Therefore we have \(\pi_1(N_0'/S^1)=\pi_1(P')=G\) and, moreover
  \[\pi_2(N_0'/S^1)\cong \pi_2(P')\oplus \mathbb{Z}[G] \otimes \pi_2(N_0/S^1) =\pi_2(P')\oplus \mathbb{Z}[G] a \oplus \mathbb{Z}[G] b,\]
  where \(a,b\) is a \(\mathbb{Z}\)-basis of \(\pi_2(N_0/S^1)\) such that \(a\) is a generator of the image of \(\pi_2(N_0)\rightarrow \pi_2(N_0/S^1)\) and \(c_1(N_0)(b)=1\).
  Note that \(c_1(N_0')|_{\pi_2(P')}=0\) since the restriction of the \(S^1\)-bundle \(N_0'\rightarrow N_0'/S^1\) to \(P'\setminus  \phi_2(\dot{D}^{4n-1})\) is trivial.

  Let \(g_1,\dots,g_k\) be generators of \(G\). Then the \((1-g_i)b\), \(i=1,\dots,k\), are in the kernels of both \(c_1(N_0')\) and \(w_2(T(N_0'/S^1))\).
  So we can represent these homotopy classes by disjointly embedded spheres with trivial normal bundle.
  We do equivariant surgery on these spheres to get a \(S^1\)-manifold \(N''\) such that (1) and (2) hold for \(N''\) and \(\pi_1(N_0''/S^1)\cong G\) and
  \[\pi_2(N_0''/S^1)\cong V\oplus \mathbb{Z}[G]a\oplus \mathbb{Z} b.\]
  By Lemma~\ref{sec:manif-with-univ}, we can find \(c\in \pi_2(P')\cong V\) such that \(w_2(TP')(c)=w_2(T(N_0/S^1))(a)\).
  Then \(c_1(N_0'')\) and \(w_2(T(N_0''/S^1))\) are trivial on \(c+a\in \pi_2(N_0''/S^1)\).
  So we can represent \(c+a\) by an embedding of a sphere with trivial normal bundle and do equivariant surgery on this sphere to get an \(S^1\)-manifold \(M\).
  For this \(M\), (1) and (2) hold.
  Moreover, \(\pi_1(M_0/S^1)\cong G\), \(\pi_2(M_0/S^1)\cong V\oplus \mathbb{Z}b\).
  Since \(c_1(M_0)(b)=1\), and \(c_1(M_0)|_V=0\) we see from the exact homotopy sequence for the fibration \(M_0\rightarrow M_0/S^1\) that
  \(\pi_1(M_0)=G\) and \(\pi_2(M_0)=V\).
  Since \(M_0\rightarrow M\) is 3-connected the claim follows.
\end{proof}

From Theorem~\ref{sec:equivariant-surgery-3} we immediately get the following corollary which is Theorem~\ref{sec:introduction-1} from the introduction.

\begin{corollary}
  Let \(V\) be a group such that there is a closed, connected, oriented manifold \(P\) of dimension \(4n\geq 8\) with non-spin universal covering and \(\pi_2(P)\cong V\).
  Then there is a closed, connected, oriented \(S^1\)-manifold \(M\) of dimension \(4n\) with \(\hat{A}(M)\neq 0\) and \(\pi_2(M)\cong V\).
\end{corollary}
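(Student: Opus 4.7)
The plan is to deduce the Corollary directly from Theorem~\ref{sec:equivariant-surgery-3} by specialising its hypotheses to the data supplied by the Corollary. Given the manifold $P$ of the Corollary, I would first set $G := \pi_1(P)$. Since $P$ is a closed manifold it has the homotopy type of a finite CW-complex, so $G$ is finitely presented. Moreover, as recalled at the beginning of Section~\ref{sec:equivariant-surgery}, the deck-transformation action of $\pi_1(P)$ on the universal cover $\tilde{P}$ equips $\pi_2(P) \cong \pi_2(\tilde{P}) \cong H_2(\tilde{P};\mathbb{Z})$ with a canonical $\mathbb{Z}[G]$-module structure. In particular, the abstract abelian group $V \cong \pi_2(P)$ acquires the structure of a $\mathbb{Z}[G]$-module which one can feed into Theorem~\ref{sec:equivariant-surgery-3}.

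With these choices all hypotheses of Theorem~\ref{sec:equivariant-surgery-3} are satisfied by $P$ itself: $P$ is a closed, connected, oriented manifold of dimension $4n \geq 8$ with non-spin universal covering, $\pi_1(P) \cong G$ is finitely presented, and $\pi_2(P) \cong V$ as $\mathbb{Z}[G]$-modules. Applying that theorem produces a closed, connected, oriented, effective $S^1$-manifold $M$ of dimension $4n$ with $\pi_1(M) \cong G$ and $\pi_2(M) \cong V$ as $\mathbb{Z}[G]$-modules, which in particular realises the required isomorphism of abelian groups claimed in the Corollary. The only discrepancy between the two statements is that Theorem~\ref{sec:equivariant-surgery-3} delivers strictly more structure (effectivity of the action, control of the $\mathbb{Z}[G]$-module structure on $\pi_2$, and non-spinness of the universal cover), all of which we are free to forget.

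For the remaining numerical assertion one invokes the final bullet of Theorem~\ref{sec:equivariant-surgery-3}: the manifold $M$ is equivariantly bordant to a linear $S^1$-action on $\mathbb{C}P^{2n}$, so by oriented bordism invariance of the $\hat{A}$-genus one has $\hat{A}(M) = \hat{A}(\mathbb{C}P^{2n})$, and the right-hand side is a classical nonzero rational number (computable from the standard multiplicative formula for the $\hat{A}$-class on $\mathbb{C}P^{2n}$). Consequently there is no genuine obstacle in this deduction: all the substance of the argument is already packaged in Theorem~\ref{sec:equivariant-surgery-3}, whose proof performs the equivariant surgeries required to adjust $\pi_1$ and $\pi_2$ while staying inside the bordism class of a linear action on $\mathbb{C}P^{2n}$, together with the auxiliary Lemma~\ref{sec:equivariant-surgery-2} supplying the non-spin building block $P'$. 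The Corollary itself reduces to the translation of data described in the previous paragraph.
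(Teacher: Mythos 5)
Your proof is correct and matches the paper's own (one-line) deduction of this corollary from Theorem~\ref{sec:equivariant-surgery-3}: you take $G = \pi_1(P)$, observe it is finitely presented and that $V \cong \pi_2(P)$ carries the canonical $\mathbb{Z}[G]$-module structure from the deck action, apply the theorem, and forget the extra data. The only difference is that you spell out the bookkeeping that the paper leaves implicit.
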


The following corollary shows that the condition (1) of Corollary~\ref{sec:manif-with-univ-5} gives the optimal condition which only depends on \(\image h_{M2}\) and guarantees the vanishing of the \(\hat{A}\)-genus of a closed, oriented \(S^1\)-manifold \(M\).

\begin{corollary}
  \label{sec:equivariant-surgery-1}
  Let \(V\) be a finitely generated, abelian group.
  Assume that \(V\) is either infinite or finite and of even order.
  Then there is a simply connected, closed, oriented manifold \(M\) of dimension \(4n\geq 8\) such that
  \begin{itemize}
  \item \(H_2(M;\mathbb{Z})=\pi_2(M)=V\).
  \item \(S^1\) acts effectively on \(M\).
  \item \(M\) is equivariantly bordant to a linear \(S^1\)-action on \(\mathbb{C}P^{2n}\). In particular, \(\hat{A}(M)=\hat{A}(\mathbb{C} P^{2n})\neq 0\).
  \end{itemize}
\end{corollary}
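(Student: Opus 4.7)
The plan is to derive the corollary from Theorem~\ref{sec:equivariant-surgery-3} applied with \(G=\{1\}\); since for a simply connected manifold the universal cover is the manifold itself, it suffices to construct a closed, connected, oriented, simply connected, non-spin \(4n\)-manifold \(P\) with \(\pi_2(P)\cong V\). For such \(P\) one has \(H^2(P;\mathbb{Z}/2)\cong\Hom(V,\mathbb{Z}/2)\), so non-spinness requires \(\Hom(V,\mathbb{Z}/2)\neq 0\), which is exactly the hypothesis that \(V\) is infinite or of even order.

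The building blocks come from surgery on a single embedded \(S^2\). For any \(m\geq 1\), surgery on \(S^2\times S^{4n-2}\) along an embedded representative of \(m[S^2\times\mathrm{pt}]\) yields a spin, simply connected, closed, oriented manifold \(Q_m\) with \(\pi_2(Q_m)\cong\mathbb{Z}/m\); when \(m\) is even, the analogous surgery on \(\mathbb{C}P^{2n}\) along an embedded representative of \(m[\mathbb{C}P^1]\) yields a non-spin, simply connected, closed, oriented manifold \(P_m\) with \(\pi_2(P_m)\cong\mathbb{Z}/m\). In both cases the normal bundle of the surgery sphere is an oriented real rank-\((4n-2)\) bundle over \(S^2\), classified by \(w_2\in\pi_1(SO(4n-2))=\mathbb{Z}/2\), and this \(w_2\) vanishes because \(w_2(T(S^2\times S^{4n-2}))=0\) in the spin case, while in the non-spin case \(w_2(T\mathbb{C}P^{2n})(m[\mathbb{C}P^1])=m\bmod 2=0\) because \(m\) is even. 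After surgery, the induced \(w_2\) on \(P_m\) sends the generator of \(\pi_2(P_m)\) to \(1\in\mathbb{Z}/2\), so \(P_m\) is indeed non-spin.

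Writing \(V=\mathbb{Z}^r\oplus\mathbb{Z}/m_1\oplus\cdots\oplus\mathbb{Z}/m_k\), I would then assemble \(P\) by connected sum: if \(r\geq 1\) take
\[P=\mathbb{C}P^{2n}\,\#\,\#^{r-1}(S^2\times S^{4n-2})\,\#\,Q_{m_1}\,\#\cdots\#\,Q_{m_k},\]
and if \(r=0\) (so some \(m_i\), WLOG \(m_1\), is even) take \(P=P_{m_1}\,\#\,Q_{m_2}\,\#\cdots\#\,Q_{m_k}\). A connected sum of simply connected \(4n\)-manifolds with \(4n\geq 4\) is simply connected with \(\pi_2\) the direct sum of the summands' \(\pi_2\)'s, and non-spinness is inherited from any non-spin summand (\(\mathbb{C}P^{2n}\) in the first case, \(P_{m_1}\) in the second). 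Applying Theorem~\ref{sec:equivariant-surgery-3} to this \(P\) delivers the required \(M\).

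The main obstacle I anticipate is the non-spin surgery on \(\mathbb{C}P^{2n}\): one needs an embedded representative of \(m_1[\mathbb{C}P^1]\) with trivial normal bundle. Haefliger-Zeeman guarantees embedded representability (and isotopy uniqueness) in high codimension, so the normal bundle depends only on the homotopy class, and the parity \(m_1\equiv 0\bmod 2\) is precisely what forces \(w_2\) of this normal bundle to vanish---hence the bundle is trivial and the surgery legal. Without this parity condition the construction in the second case breaks down, which mirrors the necessary condition \(\Hom(V,\mathbb{Z}/2)\neq 0\) identified at the outset.
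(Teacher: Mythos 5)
Your proposal is correct, and it shares the same skeleton as the paper's proof: apply Theorem~\ref{sec:equivariant-surgery-3} with \(G=\{1\}\), so that everything reduces to producing a simply connected, closed, oriented, non-spin \(4n\)-manifold \(P\) with \(\pi_2(P)\cong V\), with the hypothesis on \(V\) entering precisely through the existence of a non-trivial homomorphism \(V\to\mathbb{Z}/2\). The construction of \(P\) is where you diverge. The paper picks a free resolution \(0\to\mathbb{Z}^{k_1}\to\mathbb{Z}^{k_2}\to V\to 0\), forms a connected sum \(P_1\) of copies of \(S^2\times S^{4n-2}\) and the non-trivial linear \(S^{4n-2}\)-bundle \(W\) over \(S^2\) so that \(H_2(P_1)\cong\mathbb{Z}^{k_2}\) and \(w_2(TP_1)\) realizes \(\psi\circ\varphi\) under the universal coefficient isomorphism, and then kills the relation classes \(\iota(a_i)\) (which have trivial normal bundle since \(\psi\circ\varphi\circ\iota=0\)) by surgery. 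You instead invoke the structure theorem for finitely generated abelian groups, build each cyclic summand separately---the spin pieces \(Q_m\) by surgery inside \(S^2\times S^{4n-2}\), and the non-spin piece \(P_{m_1}\) by surgery inside \(\mathbb{C}P^{2n}\), with the parity of \(m_1\) exactly what trivializes the normal bundle---and then connect-sum everything. The two are interchangeable here: yours is more concrete and makes the role of the parity condition transparent; the paper's avoids the structure theorem (working directly with a presentation of \(V\)), which is the form that would extend to \(\mathbb{Z}[G]\)-modules if one wanted a \(\pi_1\)-equivariant version of the corollary. Your checks of normal-bundle triviality via \(\pi_1(SO(4n-2))=\mathbb{Z}/2\) and of the persistence of \(w_2\neq 0\) after surgery and connected sum are sound.
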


\begin{proof}
By Theorem~\ref{sec:equivariant-surgery-3} and the Hurewicz theorem it suffices to show that there is a closed oriented \(4n\)-manifold \(P\) which is simply connected and non-spin such that \(H_2(P;\mathbb{Z})=V\).

  To do so first note that by our assumption on \(V\) there is a non-trivial homomorphism \(\psi:V\rightarrow \mathbb{Z}_2\).
  Moreover, the group \(V\) fits into a exact sequence
  \begin{equation}
    \label{eq:2}
    0\rightarrow \mathbb{Z}^{k_1}\stackrel{\iota}{\rightarrow} \mathbb{Z}^{k_2}\stackrel{\varphi}{\rightarrow} V\rightarrow 0
  \end{equation}
  with some \(0\leq k_1\leq k_2<\infty\).
  Let \(a_{1},\dots,a_{k_1}\) be the standard basis of \(\mathbb{Z}^{k_1}\).

  Denote by \(W\) the total space of the non-trivial \(S^{4n-2}\)-bundle with structure group \(SO(4n-1)\) over \(S^2\).
  Then by taking the connected sum of several copies of \(W\) and \(S^2\times S^{4n-2}\), we construct a simply connected manifold \(P_1\) with \(H_2(P_1;\mathbb{Z})=\mathbb{Z}^{k_2}\) and such that \(w_2(TP_1)\in H^2(P_1;\mathbb{Z}_2)\) corresponds to \(\psi\circ \varphi\) under the isomorphism \(H^2(P_1;\mathbb{Z}_2)\cong \hom(H_2(P_1;\mathbb{Z}),\mathbb{Z}_2)\).

  Since (\ref{eq:2}) is exact we can represent the homology classes corresponding to the \(\iota(a_i)\) by disjointly embedded spheres in \(P_1\) with trivial normal bundles.
  By doing surgery on these spheres we get a manifold \(P\) as required.
\end{proof}

The following lemma is used in the proof of Theorem~\ref{sec:equivariant-surgery-3}.

\begin{lemma}
  \label{sec:equivariant-surgery-2}
  Let \(P\) be a closed, connected, oriented manifold of dimension \(n\geq 7\). Then there exists a closed, connected, oriented manifold \(P'\) such that
  \begin{itemize}
  \item \(\dim P'=n-1\).
  \item \(\pi_1(P')\cong \pi_1(P)\), \(\pi_2(P')\cong \pi_2(P)\).
  \item The universal covering of \(P'\) is spin if and only if the universal covering of \(P\) is spin.
  \end{itemize}
\end{lemma}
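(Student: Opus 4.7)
The plan is to realize $P'$ as the boundary of a regular neighborhood of the 3-skeleton of $P$, made concrete via Morse theory. Fix a self-indexing Morse function $f: P \to \mathbb{R}$, and after cancelling surplus index-$0$ critical points against index-$1$ ones (possible since $P$ is connected) arrange that $f$ has a unique index-$0$ critical point. Pick a regular value $c$ lying between the index-$3$ and index-$4$ critical values, and set $N_3 := f^{-1}((-\infty,c])$ and $P' := \partial N_3 = f^{-1}(c)$; this $P'$ is a closed oriented manifold of dimension $n-1$.

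First I would check that $P'$ is connected. Starting from the single $0$-handle, whose boundary is $S^{n-1}$, each subsequent $k$-handle attachment with $k \in \{1,2,3\}$ induces a surgery on the boundary along an embedded $S^{k-1}$. Since $n \geq 7$, the attaching sphere has codimension at least $n - 3 \geq 4$ in the boundary, so its complement stays connected, and the replacement $D^k \times S^{n-1-k}$ is itself connected; hence connectedness propagates to $P'$. Orientation and closedness of $P'$ are automatic.

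For the isomorphisms on $\pi_1$ and $\pi_2$, note that $P$ is built from $N_3$ by attaching handles of index $\geq 4$, each of which preserves $\pi_i$ for $i \leq 2$, so $\pi_i(N_3) \to \pi_i(P)$ is an isomorphism for $i = 1, 2$. Dually, the Morse function $-f|_{N_3}$ exhibits $N_3$ as obtained from a collar of $P'$ by attaching handles of index $\geq n - 3 \geq 4$, so $\pi_i(P') \to \pi_i(N_3)$ is similarly an isomorphism in these degrees. Composing yields $\pi_i(P') \cong \pi_i(P)$ for $i = 1, 2$.

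Finally, since $P' \subset P$ is a codimension-one orientable hypersurface of the orientable manifold $P$, its normal bundle is trivial, so $TP|_{P'} \cong TP' \oplus \mathbb{R}$ as real vector bundles and hence $w_2(TP') = i^* w_2(TP)$, where $i: P' \hookrightarrow P$ is the inclusion. Together with the isomorphism $i_*: \pi_2(P') \to \pi_2(P)$, Lemma~\ref{sec:manif-with-univ} then shows that the universal cover of $P'$ is spin if and only if that of $P$ is spin. The main obstacle is keeping the Morse-theoretic bookkeeping straight, in particular securing connectedness of $P'$ and correctly identifying the dual-handle structure that realizes $N_3$ from a collar of $P'$; the remaining algebraic topology is routine.
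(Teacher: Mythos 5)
Your proof is correct and takes essentially the same route as the paper: both take $P'$ to be a regular level set of a Morse function on $P$ lying just above the index-$3$ critical values, observe that the inclusion $P'\hookrightarrow P$ factors through two highly connected inclusions (giving the $\pi_1$, $\pi_2$ isomorphisms), and transfer the spin condition via $TP|_{P'}\cong TP'\oplus\mathbb{R}$ together with Lemma~\ref{sec:manif-with-univ}. Your explicit handle-cancellation and connectedness check are fine but not strictly needed, since $3$-connectedness of $P'\hookrightarrow P$ already gives $\pi_0$-bijectivity.
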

\begin{proof}
  Let \(f:P\rightarrow ]-1,n+1[\) be a Morse-function on \(P\) such that \(f(x)\in ](4i-1)/4,(4i+1)/4[\) for all critical points \(x\) of \(f\) with index \(i=0,\dots,n\).
  Let \(W=f^{-1}(]-1,7/2])\) and \(P'=f^{-1}(7/2)=\partial W\).
  Then \(P\) can be constructed from \(W\) by attaching handles of dimension at least \(4\).
  Similarly \(W\) can be constructed from \(f^{-1}([13/4,7/2])\cong P'\times [13/4,7/2]\) by attaching handles of dimension at least \(n-3\).
  Therefore the inclusion \(W\hookrightarrow P\) is \(3\)-connected.
  Moreover the inclusion \(P'\hookrightarrow W\) is \((n-4)\)-connected.
  Since \(n\geq 7\), it follows that \(P'\hookrightarrow P\) is \(3\)-connected.
  So the first two claims of the lemma follow.
  The last claim follows from Lemma \ref{sec:manif-with-univ} because \(TP'\oplus \mathbb{R}\cong TP|_{P'}\).
\end{proof}

\section{Another example}
\label{sec:another-example}

In this section we give an example which shows that there is also a gap in the proof of Lemma 2 in \cite{MR1979364} which persists even after the correction of Amann and Dessai.

This lemma is the analog of Lemma 9.3 in \cite{MR954493}.
Its purpose is to guarantee that the second claim of Lemma~\ref{sec:manif-with-univ-3} holds for \(\pi_2\)- and \(\pi_4\)-finite closed oriented \(S^1\)-manifolds \(M\).
In the proof of that lemma the finiteness assumption on the homotopy groups is only used to show that the parity of the sum of weights of the isotropy representation at any \(S^1\)-fixed point in \(M\) is independent of the fixed point.
Then the lemma is concluded from this condition on the weights.

However, the condition on the weights is not good enough to conclude the lemma as the following example shows.
Let \(\mathbb{C}P^2_\pm\) be \(\mathbb{C}P^2\) equipped with the following \(S^1\)-action
\[S^1\times \mathbb{C} P_\pm^2\rightarrow \mathbb{C}P_\pm^2\quad \quad (g,[z_0:z_1:z_2])\mapsto [z_0:g^1 z_1:g^{\pm 1} z_2].\] 

Then one can form the equivariant connected sum \(\mathbb{C}P^2_-\#\mathbb{C}P^2_+\) at the fixed points \([1:0:0]\in \mathbb{C} P^2_\pm\).
The resulting \(S^1\)-manifold \(M\) is diffeomorphic to \(\mathbb{C} P^2\#\mathbb{C} P^2\).
Moreover, the sum of weights at each \(S^1\)-fixed point in \(M\) is odd and all \(H\)-fixed point sets \(M^H\) for \(H\subset S^1\) are orientable.
However, Lemma 2 of \cite{MR1979364} does not hold for the \(\mathbb{Z}_2\)-fixed point component on which \(S^1\) acts non-trivially. And, indeed, \(\hat{A}(M)= -\frac{1}{4}\neq 0\).

We think the problem in the proof of Lemma 2 of \cite{MR1979364} is the last equation on page 350.
In this equation the constant \(c\) is not integral.
It is just a rational number with denominator equal to the order of the principal isotropy group of the \(S^1\)-action on the fixed point component in question.
Therefore one cannot say anything about the parity of the left hand side of the equation from the parity of the sum on the right hand side unless one only has odd order finite isotropy groups.

Under the assumption that all finite isotropy groups of points in \(M\) have odd order, the proofs of both Lemma 1 and Lemma 2 of \cite{MR1979364} work so that one can correct the main result of that paper as follows:

\begin{theorem}
  \label{sec:another-example-1}
  Let \(M\) be a connected, oriented, \(\pi_2\)- and \(\pi_4\)-finite closed manifold of even dimension such that \(S^1\) acts on \(M\) without finite isotropy groups of even order.
  Then the equivariant elliptic genus of \(M\) is rigid. Moreover, the \(\hat{A}\)-genus of \(M\) vanishes if the action is non-trivial.
\end{theorem}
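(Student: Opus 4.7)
The plan is to follow the Bott--Taubes framework \cite{MR954493}, whose rigidity proof requires two geometric inputs: (a) orientability of every component of $M^H$ containing an $S^1$-fixed point, and (b) a compatible choice of orientations on these components satisfying Lemma 9.3 of \cite{MR954493}. The idea is that the odd-order hypothesis on finite isotropy groups restores exactly those parity arguments in Lemmas 1 and 2 of \cite{MR1979364} that were shown to fail in Section \ref{sec:count-lemma-1} and in the example above.

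For input (a), I would follow the proof of Lemma~\ref{sec:manif-with-univ-3}. Let $F \subset M^H$ be a component containing an $S^1$-fixed point, and assume $H$ is the principal isotropy of the induced $S^1$-action on $F$. If $H = S^1$ there is nothing to show; otherwise $H$ is finite, and by hypothesis $|H|$ is odd. Then in the equivariant splitting $TM|_F = E_0 \oplus \bigoplus_{0 < i < |H|/2} E_i$ the troublesome real summand $E_{|H|/2}$ does not appear, every non-tangent summand carries a canonical complex (hence oriented) structure, and orientability of $F = E_0$ follows directly from orientability of $M$.

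For input (b), I would adopt the strategy of Herrera--Herrera \cite{MR1979364}. Using $\pi_2$- and $\pi_4$-finiteness one obtains a well-defined parity for the sum of weights of the $S^1$-representation at any $S^1$-fixed point, and this parity controls the orientation compatibility needed by Bott--Taubes. As indicated at the end of this section, the gap in their proof arises because a constant $c$ on page 350 of \cite{MR1979364} equals $1/|H'|$ times an integer, where $|H'|$ is the order of a principal isotropy group for an auxiliary $S^1$-action on some equivariant $S^2 \to M$. Under our hypothesis every such $|H'|$ is odd, so $c$ and that integer have the same parity and the argument closes. The analogous correction factor explained in Section~\ref{sec:count-lemma-1} for the first Chern class of $f^*TM$ is likewise odd, and Lemma 1 of \cite{MR1979364} is also restored.

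With (a) and (b) in hand, the rigidity of the equivariant elliptic genus follows from the Lefschetz fixed-point formula and complex-analytic argument of Bott and Taubes, and the vanishing of the $\hat{A}$-genus under a non-trivial action follows by the Hirzebruch--Slodowy argument exactly as in the proof of Corollary~\ref{sec:manif-with-univ-4}. The main obstacle is the bookkeeping in (b): identifying all the places in \cite{MR1979364} where an even-order principal isotropy on an auxiliary sphere could spoil parity, and confirming that each is salvaged by the odd-order hypothesis. I have only sketched this at the level of the single constant $c$; a full write-up would spell out the Chern-class parity statements on each auxiliary $S^2$ and combine them with the $\pi_2$- and $\pi_4$-finiteness exactly as Herrera--Herrera and Amann--Dessai intended.
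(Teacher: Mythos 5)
Your proposal is correct and matches the paper's approach: the paper simply observes that, once every finite isotropy group has odd order, the proofs of Lemmas 1 and 2 of Herrera--Herrera go through, because the problematic correction factor (the order of the principal isotropy group on the relevant sphere or fixed-point component) is then odd and preserves the parity arguments. Your direct argument for input (a) --- that $|H|$ odd means no $E_{k/2}$ summand appears, so $TF$ is a complement of a complex bundle inside the orientable $TM|_F$ --- is precisely the ``$k$ odd'' case in the paper's proof of Lemma~\ref{sec:manif-with-univ-3}, and your treatment of input (b) defers to the restored Herrera--Herrera/Amann--Dessai argument just as the paper does.
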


Under the assumption that the \(S^1\)-action on \(M\) is semi-free, i.e. all orbits are either free orbits or \(S^1\)-fixed points, we get the following stronger result.

\begin{theorem}
  Let \(M\) be a connected, oriented, closed manifold of even dimension such that \(S^1\) acts semi-freely on \(M\).
  Assume that one of the following two conditions holds:
  \begin{itemize}
  \item \(M\) is \(\pi_2\)- and \(\pi_4\)-finite.
  \item The universal covering of \(M\) is spin.
  \end{itemize}
  Then a multiple of \(M\) is equivariantly bordant to a spin manifold with semi-free \(S^1\)-action.
\end{theorem}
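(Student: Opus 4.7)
The plan is to combine a fixed-point-data classification of semi-free $S^1$-bordism with a local spin-compatibility derived from the hypothesis.

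\emph{Setup.} Let $F = M^{S^1}$ with connected components $F_i$ and normal bundles $\nu_i$. Semi-freeness forces the irreducible summands of each normal representation to be $V_1$, so after orienting $F$ compatibly with $M$ each $\nu_i$ acquires a canonical $S^1$-invariant complex structure. Under either hypothesis the semi-free analogue of Lemma~9.3 of \cite{MR954493} is available: under (b) this is Lemma~\ref{sec:manif-with-univ-3}, and under (a) it is implicit in Theorem~\ref{sec:another-example-1}, which applies since semi-freeness excludes finite isotropy of even order. Hence $F$ is oriented compatibly with the complex structures on the $\nu_i$ and with the orientation of $M$.

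\emph{Local spin compatibility.} I would next show that $w_2(TF_i) \equiv c_1(\nu_i) \pmod 2$, at least after pullback to the universal cover of $F_i$. Under (b), for any $g \colon S^2 \to F_i$ the composite $\iota \circ g \colon S^2 \to M$ with $\iota \colon F_i \hookrightarrow M$ satisfies $(\iota g)^* w_2(TM) = 0$ by Lemma~\ref{sec:manif-with-univ}; expanding via $\iota^* TM = TF_i \oplus \nu_i$ and $w_2(\nu_i) = c_1(\nu_i) \bmod 2$ gives $g^*(w_2(TF_i) + c_1(\nu_i)) = 0$, which by the $\pi_2$-description of $w_2$ in Lemma~\ref{sec:manif-with-univ} encodes the claimed congruence on the universal cover of $F_i$. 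Under (a), the same congruence arises from the weight-parity identity contained in (the corrected, semi-free case of) Lemma~2 of \cite{MR1979364}, applicable thanks to Theorem~\ref{sec:another-example-1}.

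\emph{Spin replacement at the fixed set.} The compatibility implies that the equivariant disk bundle $D(\nu_i) \to F_i$, with $S^1$ acting on fibres by scalar multiplication, is spin: $w_2(TD(\nu_i)) = \pi^*(w_2(TF_i) + w_2(\nu_i)) = 0$. I would assemble a closed spin semi-free $S^1$-manifold $N$ by equivariantly capping the free-action boundary $S(\nu) \to P(\nu)$ by a spin manifold with free $S^1$-action; the existence of such a cap rationally is standard, since $\Omega^{Spin}_* \otimes \mathbb{Q} \to \Omega^{SO}_* \otimes \mathbb{Q}$ is an isomorphism. If the compatibility only holds on the universal cover of $F_i$, I would first perform surgery on $F_i$ within its rational oriented bordism class (over $BU(\mathrm{rank}\,\nu_i)$) to replace it by a simply connected representative $F_i'$, on which pullback to the universal cover is no loss.

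\emph{Classification and conclusion.} Finally, I would invoke the Ossa/Stong/tom Dieck classification of semi-free $S^1$-bordism: the fixed-point map induces an isomorphism
\[
\Omega^{SO,\mathrm{sf}}_*(S^1) \otimes \mathbb{Q} \;\xrightarrow{\;\cong\;}\; \bigoplus_{k \ge 0} \Omega^{SO}_*(BU(k)) \otimes \mathbb{Q}.
\]
Since by construction $[M]$ and $[N]$ have the same image under this isomorphism, clearing denominators yields a positive integer $k$ with $k[M] = [N]$ in $\Omega^{SO,\mathrm{sf}}_*(S^1)$, proving the theorem. The main obstacle will be the spin-replacement step: promoting the local compatibility from the universal cover to a manifold realizing the correct rational characteristic numbers, while simultaneously producing a spin free-action cap, requires delicate but standard bookkeeping of Pontryagin and Chern numbers, and is where the ``multiple of $M$'' clause in the conclusion becomes essential.
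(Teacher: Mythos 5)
Your proposal takes a genuinely different route from the paper. The paper's proof is three lines: by Ossa's Theorem VI (\cite{MR246324}) under condition (a), respectively by the proof of Lemma~9.3 of \cite{MR954493} and Lemma~\ref{sec:manif-with-univ} under condition (b), the semi-free action is either \emph{even} or \emph{odd} (all fixed-component codimensions $\equiv 0$ or all $\equiv 2 \bmod 4$); the theorem then follows immediately from Theorem~2.4 of \cite{MR882700}, which says precisely that a semi-free oriented $S^1$-action of even or odd type is, after multiplication, equivariantly spin-bordant. You instead attempt to build the spin replacement by hand via fixed-point data and rational localization.

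There is a substantive gap in your argument. The congruence you derive, namely that $w_2(TF_i) + c_1(\nu_i)$ vanishes on $\pi_2(F_i)$, is a condition on characteristic classes of the individual components $F_i$ and says nothing about the relative parity of $\operatorname{rank}_{\mathbb C}\nu_i$ across components. But the even/odd dichotomy on codimensions is a \emph{necessary} condition for the conclusion: a spin manifold with $S^1$-action has all fixed codimensions of the same parity mod~4 (Atiyah--Hirzebruch, Landweber--Stong), and since rational bordism over $BU(k)$ remembers $k$, any spin $N$ with the same rational fixed-point data as $M$ forces $M$ to satisfy the dichotomy. So your construction of the spin cap cannot go through in general without first establishing even/odd type --- the free spin cap you want need not exist, because $\Omega^{Spin,\mathrm{sf}}_{*}(S^1)\otimes\mathbb{Q}\to\Omega^{SO,\mathrm{sf}}_{*}(S^1)\otimes\mathbb{Q}$ is not surjective onto mixed-parity fixed-point data, even though $\Omega^{Spin}_*\otimes\mathbb{Q}\to\Omega^{SO}_*\otimes\mathbb{Q}$ is an isomorphism. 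Relatedly, your derivation of the $w_2$-congruence under hypothesis (a) from the ``weight-parity identity'' is not justified: that identity is a statement about the parity of the total weight sum at fixed points (which, for semi-free actions, is exactly the even/odd codimension condition the paper uses), not a statement about $w_2(TF_i)+c_1(\nu_i)$. If you replace your local congruence by the even/odd codimension condition --- which \emph{does} follow from the hypotheses, and is what the paper extracts --- then the conclusion is exactly Uchida's Theorem~2.4, and the hand-made surgery/capping bookkeeping you flag as the main obstacle can be avoided entirely.
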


\begin{proof}
  By \cite[Theorem VI]{MR246324}, \(\pi_2\)- and \(\pi_4\)-finiteness imply that the \(S^1\)-action is either even or odd, i.e. the codimensions of all fixed point components are either congruent to zero modulo four or to two modulo four, respectively.
  By the \((k=1)\)-case of the proof of Lemma 9.3 of \cite{MR954493} and Lemma \ref{sec:manif-with-univ}, the same conclusion holds if the universal covering of \(M\) is spin.

  By cutting out an open tubular neighborhood of \(M^{S^1}\) from \(M\),
  one sees that the normal unit sphere bundle to \(M^{S^1}\) bounds an oriented free \(S^1\)-manifold.
  Since the action on \(M\) is of even or odd type,
this means that the disjoint union of the normal unit sphere bundles to the fixed point components of codimension divisible by four (not divisible by four, respectively) bounds an oriented free \(S^1\)-manifold. 
  So the claim follows from
  Theorem 2.4 of \cite{MR882700}.
\end{proof}

We conjecture that under the assumptions of Theorem~\ref{sec:manif-with-univ-1} or of Theorem~\ref{sec:another-example-1}, a multiple of \(M\) is also equivariantly bordant to a spin manifold with  circle action.
We note in this context that the vanishing of the \(\hat{A}\)-genus implies that a multiple of \(M\) is non-equivariantly bordant to a spin manifold with non-trivial circle action (see \cite[Section 2.3]{MR0278334}).

\bibliography{counterexamples}{}
\bibliographystyle{amsplain}

\end{document}